\newcommand{\p}{\partial}
\newcommand{\ds}{\displaystyle}
\begin{document}

\title{Numerical analysis for subdiffusion problem with non-positive memory
}

\titlerunning{Numerical analysis for subdiffusion problem}        

\author{Wenlin Qiu\textsuperscript{1}         \and
        Xiangcheng Zheng\textsuperscript{2}
}

\authorrunning{W. Qiu, X. Zheng} 

\institute{W. Qiu \\
              \email{wlqiu@sdu.edu.cn}  \\
           \at
              X. Zheng (Corresponding author) \\
           \email{xzheng@sdu.edu.cn} \\
           \at
               {1} School of Mathematics, Shandong University, Jinan, Shandong 250100, P. R. China \\
           \at
               {2} School of Mathematics, State Key Laboratory of Cryptography and Digital Economy Security, Shandong University, Jinan 250100, P. R. China
    }

\date{Received: date / Accepted: date}

\maketitle

\begin{abstract}
  This work considers the subdiffusion problem with non-positive memory, which not only arises from physical laws with memory, but could be transformed from sophisticated models such as subdiffusion or subdiffusive Fokker-Planck equation with variable exponent.  We apply the non-uniform L1 formula and interpolation quadrature to discretize the fractional derivative and the memory term, respectively, and then adopt the complementary discrete convolution kernel approach to prove the stability and first-order temporal accuracy of the scheme. The main difficulty in numerical analysis lies in the non-positivity of the kernel and its coupling with the complementary discrete convolution kernel (such that different model exponents are also coupled), and the results extend those in [Chen, Thom\'ee and Wahlbin, {\it Math. Comp.} 1992] to the subdiffusive case. Numerical experiments are performed to substantiate the theoretical results.

\keywords{subdiffusion \and non-positive memory \and variable step size \and finite element method \and error estimate}

 \subclass{45K05 \and 65M12 \and 65M60}
\end{abstract}

\section{Introduction}
\subsection{Model problem}
This work considers the following subdiffusion problem with memory, which could be derived from, e.g. the constitutive laws of the flow in  porous media with memory  \cite{Kra3,Mahata,Mahata1}
\begin{align}
  & _{}^{c}\p_t^{\sigma}u(\bm x,t) - \Delta u(\bm x,t) = (K * B u)(\bm x,t) + f(\bm x,t), ~~(\bm x,t) \in \Omega\times(0,T]; \label{model}  \\
  &\qquad u(\bm x,0)=u_0(\bm x),~\bm x\in \Omega; \quad u(\bm x,t) = 0,~(\bm x,t) \in \p \Omega\times[0,T]. \label{IBV}
\end{align}
Here $\Omega$ is a bounded domain in $\mathbb R^d$ ($1\leq d\leq 3$) with the boundary $\partial\Omega$, $\bm x := (x_1,\cdots,x_d)$, $T>0$, $\Delta$ denotes the Laplace operator, and $f$ and $u_0$ refer to the source term and the initial value, respectively.  The Caputo fractional derivative is defined via the convolution  (denoted by $*$) and the standard Gamma function $\Gamma(\cdot)$ \cite{Jin,JinLi,JinLi2}
\begin{align}
   _{}^{c}\p_t^{\sigma}  \varphi(t) & = \tilde{k}_{1-\sigma}(t) * \p_t\varphi(t), \quad 0< \sigma<1; ~~
    \tilde{k}_{\vartheta}(t)  = \frac{t^{\vartheta-1}}{\Gamma(\vartheta)}, \quad \vartheta >0, \label{op}
\end{align}
the
$   B:  = \mu_1 \Delta  + \mu_2 I
$ with $ |\mu_1| + |\mu_2| < \infty$, and the
  kernel satisfies
\begin{align}
   |K(t)| & \leq \kappa t^{-\beta},\quad t\in (0,T], \quad 0 < \beta < \min(1,2\sigma), \label{condi}
\end{align}
for some $\kappa>0$. It is worth mentioning that this condition only gives the bound of $K$ but does not impose other favorable properties such as the positive definiteness, a critical property in conventional numerical analysis of partial integrodifferential equations.

Apart from deriving  \eqref{model}--\eqref{IBV} from underlying mechanisms, the proposed model could also be transformed from other sophisticated models. Two typical scenarios are given as follows.

\underline{Subdiffusion equation with variable exponent} is applied in several fields such as the anomalous modeling in the heterogeneous media \cite{Sun1,Sun2}
\begin{equation}\label{VtFDE1}
   _{}^{c}\p_t^{\alpha(t)}u(\bm x,t)  - \Delta u (\bm x,t) = f(\bm x,t),
\end{equation}
where the Caputo fractional derivative $_{}^{c}\p_t^{\alpha(t)}$ with variable exponent $0< \alpha(t)<1$ is defined as
\cite{LorHar}
\begin{align}
& _{}^{c}\p_t^{\alpha(t)}  \varphi(t) = \tilde{k}_{1-\alpha(t)}(t) * \p_t\varphi(t), \quad t>0. \label{opr1}
\end{align}
 Due to the presence of the variable-exponent kernel, the operator \eqref{opr1} in problem \eqref{VtFDE1} lacks favorable properties as its constant-exponent analogue, which makes the analysis intricate. To resolve this issue, a perturbation method is proposed in \cite[Section 5]{Zheng} to transform problem \eqref{VtFDE1} into the following equivalent model with $\alpha_0=\alpha(0)$
\begin{align}\label{modelA}
  & _{}^{c}\p_t^{\alpha_0}u - \Delta u = \tilde{g}^{\prime} * (-u) + (f+\tilde{g} u_0),
\end{align}
where
\begin{align*}
   \tilde{g}(t) = \int_{0}^{t}\tilde k_{1-\alpha(z)}(t) \left[ -\alpha'(z)\ln t + \frac{\Gamma'(z)}{\Gamma(z)}(1-\alpha(z))\alpha'(z) \right] dz
\end{align*}
has the following estimates for $t\in (0,T]$ and for some constants $C_1, C_2>0$
\begin{align*}
   |\tilde{g}(t)| &\leq C_1 t^{1-\alpha_0} (1+|\ln t|) \rightarrow 0 \;\; \text{as} \;\; t\rightarrow 0^+, \\
   |\tilde{g}^{\prime}(t)| &\leq C_1 t^{-\alpha_0} (1+|\ln t|) \leq C_2 t^{-(\alpha_0+\varepsilon)}, \;\; 0<\varepsilon  <\min\{\alpha_0,1-\alpha_0\}.
\end{align*}
   Thus, model \eqref{modelA} is a special case of \eqref{model} with $\sigma=\alpha_0$, $\beta = \alpha_0+\varepsilon$, $\mu_1=0$ and $\mu_2=-1$, the latter of which is more tractable.

\underline{Subdiffusive Fokker-Planck equation with variable exponent} is recently proposed in \cite{ZheLiQiu} to resolve the initial singularity of the solutions without affecting the long-term power-law behavior
\begin{equation}\label{VtFDE2}
  \p_t u(\bm x,t)- \p_t^{1-\alpha(t)} \Delta u (\bm x,t)= f(\bm x,t).
\end{equation}
Here $\p_t^{1-\alpha(t)}$ with the variable exponent $0< \alpha(t)<1$ denotes the Riemann-Liouville fractional derivative operator defined as follows
\cite{LorHar}
\begin{align}
& \p_t^{1-\alpha(t)}  \varphi(t) = \p_t \big(\tilde{k}_{\alpha(t)}(t)*\varphi(t) \big).  \label{opr2}
\end{align}
 Note that in the case of variable exponent, the semigroup property of the fractional integral operator is not valid such that one could not transform \eqref{VtFDE2} to \eqref{VtFDE1}  as in the constant-exponent case. To circumvent the difficulties caused by the variable-exponent operator, one could apply the convolution method in \cite[Section 3]{Zheng} to transform \eqref{VtFDE2} as
\begin{align}\label{modelB}
  & _{}^{c}\p_t^{\alpha_0}u - \Delta u = g^{\prime} * \Delta u + (\tilde{k}_{1-\alpha_0}*f),
\end{align}
where
\begin{align*}
   g(t) = \int_{0}^{1} \frac{(tz)^{\alpha(tz)-\alpha(0)}}{\Gamma(1-\alpha(0))\Gamma(\alpha(tz))} (1-z)^{-\alpha(0)}z^{\alpha(0)-1} dz
\end{align*}
satisfies
$$  |g^{\prime}(t)| \leq C_3 (1+|\ln t|) \leq C_4 t^{-\varepsilon},~~0<\varepsilon <\alpha_0$$
 for $t\in (0,T]$ and for some constants $C_3,C_4>0$.
Thus, model \eqref{modelB} is also a special case of \eqref{model} with $\sigma=\alpha_0$, $\beta = \varepsilon$, $\mu_1=1$ and $\mu_2=0$.

From the above discussions, we find that the model (\ref{model})--(\ref{IBV}) covers a broad class of equations arising in several fields, which motivates the investigation of its numerical computation.

\subsection{Main contribution}
For the ordinary differential equation analogues of (\ref{model})--(\ref{IBV}), there exist sophisticated investigations, see e.g. \cite{CheChe,Che2,MaHua,Yi1,Yi2}. For the partial differential equations like model (\ref{model})--(\ref{IBV}), several works are devoted for the special case $\sigma=1$ with smooth kernel \cite{Lin,Pani,Xu1,Xu2} or weakly singular positive-type kernel \cite{Mcl,Mus1,Mus2,Wang}, and for the subdiffusive case $0<\sigma<1$ with smooth kernel \cite{He,Kumar,Mahata1} or weakly singular positive-type kernel \cite{Mahata,Zho}. For the special case $\sigma=1$, there are few works where the kernel only satisfies (\ref{condi}) (such that it may not be positive definite). For instance, the \cite{Chen} proposes and analyzes a first-order time-discretization scheme for model (\ref{model})--(\ref{IBV}) with $\sigma=1$, where a discrete weakly singular Gronwall inequality is proposed to account for the singularity of the memory kernel.

For the case $0<\sigma<1$, there exists a very recent work \cite{Zheng} which performs numerical analysis for a related Volterra integro-differential equation with non-positive kernel transformed from (\ref{VtFDE1}). In \cite{Zheng} the proved temporal convergence rate is $\frac{1}{2}+\frac{3}{2}\alpha_0$, which is, roughly speaking, only half-order convergence for small $\alpha_0$. Furthermore, the exponential factor method is adopted in the numerical analysis of \cite{Zheng} due to the specific structure of the model, which does not work for (\ref{model})--(\ref{IBV}) due to the existence of the fractional derivative.
To our best knowledge, the unified temporal accuracy  for numerical discretization of  (\ref{model})--(\ref{IBV}) is not available in the literature, which motivates the current study.

This work extends the work \cite{Chen} by considering the subdiffusive case $0 < \sigma < 1$, where the L1 method \cite{Stynes} and piecewise-constant interpolation method are used to discretize the fractional derivative and the memory term, respectively.
In this case, the solutions have the asymptotics $\partial_t^2 u \sim t^{\sigma-2}$ at $t=0$ that is much singular than $\partial_t^2 u \sim Ct^{-\beta}$ for the case $\sigma=1$ such that a graded temporal mesh may be required to get the first-order accuracy. This, together with the nonlocality of the fractional derivative, deteriorates the uniform time-step analysis framework in \cite{Chen}. Furthermore, as the regularity of the solutions depends on $\sigma$, the two parameters $\sigma$ and $\beta$ will be coupled in time discretization analysis, which makes the derivations more complicated. To account for these difficulties, we adopt the complementary discrete convolution kernel approach \cite{Liao} and develop the convolution-kernel-based norm to establish the stability and error estimate of the scheme.

The rest of the work is organized as follows: In Sections \ref{sec2}--\ref{sec3}, we construct and analyze  semi- and fully-discrete schemes of the model (\ref{model})--(\ref{IBV}), respectively, and we perform numerical experiments to validate theoretical results in Section \ref{sec4}. The conclusion of this work is given in Section \ref{sec5}. In subsequent analysis, we define the norm of the $L^2(\Omega)$ space by
$$\|g\|:=\Big(\int_\Omega g^2(\bm x)d\bm x\Big)^{1/2}$$
 and define $H^m(\Omega)$ and $H^m_0(\Omega)$ with $m\in\mathbb N^+$ in a standard manner with standard norms \cite{AdaFou}. Furthermore, we use $Q$ to denote a generic positive constant that may assume different values at different occurrences.

\section{Time-discrete scheme} \label{sec2}
We consider temporal discretization of  \eqref{VtFDE1}--\eqref{VtFDE2} based on the following regularity conditions, which are motivated from analysis of similar models \cite{Kra2,Mahata}
\begin{align}\label{regu}
   & \|\p_t u(\cdot,t)\| + \|\p_t \Delta u(\cdot,t)\| \leq Q t^{\sigma-1}, \quad \|\p_{t}^2 u(\cdot,t)\| \leq  Q t^{\sigma-2}, \quad t\in (0,T].
\end{align}

\subsection{Numerical scheme}
Let $N$ be a positive integer and $\tau_n=t_n-t_{n-1}$ for $1\leq n \leq N$ be the nonuniform temporal step size with
\begin{align*}
  t_n=T\Big(\frac{n}{N}\Big)^{\gamma}, \quad \gamma\geq 1, \quad 0\leq n \leq N.
\end{align*}
Let $u^n:=u(\bm x,t_n)$. We consider the model \eqref{model} at $t=t_n$ for $1\leq n \leq N$
\begin{equation}\label{qwl03}
   _{}^{c}\p_t^{\sigma}u(\bm x,t_n) - \Delta u(\bm x,t_n) = (K * B u)(\bm x,t_n) + f(\bm x,t_n).
\end{equation}
We discretize the Caputo fractional derivative $^c\p_t^{\sigma} u$ by the following nonuniform $L_1$ formula \cite{Liao,Stynes}
\begin{equation}\label{qwl04}
  \begin{split}
      & D_N^{\sigma} u^n = \sum\limits_{k=1}^{n} \int_{t_{k-1}}^{t_k} \tilde{k}_{1-\sigma}(t_n-s) \delta_{\tau}u^k/\tau_k ds = \sum\limits_{k=1}^{n} a^{(n)}_{n-k} \delta_{\tau}u^k,
  \end{split}
\end{equation}
where $\delta_{\tau}u^k=u^{k}-u^{k-1}$ for $k\geq 1$ and the nonincreasing sequence $\left\{a^{(n)}_{n-k}\right\}_{k=1}^{n}$ is defined as follows
\begin{equation}\label{qwl05}
  \begin{split}
      a^{(n)}_{n-k} = \int_{t_{k-1}}^{t_k} \frac{\tilde{k}_{1-\sigma}(t_n-s)}{\tau_k}ds = \frac{\tilde{k}_{2-\sigma}(t_n-t_{k-1})-\tilde{k}_{2-\sigma}(t_n-t_{k})}{\tau_k}.
  \end{split}
\end{equation}
Based on the regularity assumptions given in \eqref{regu}, we have the $L_1$ approximation error of \eqref{qwl04}  (see \cite{Stynes} for more details)
\begin{equation}\label{qwl06}
  \begin{split}
      \left\|(R_1)^n \right\| = \left\| D_N^{\sigma} u^n - \; ^c\p_t^{\sigma} u(\bm x,t_n) \right\| = O(n^{-\min(2-\sigma,\gamma\sigma)}), \quad n\geq 1.
  \end{split}
\end{equation}
 We apply the interpolation quadrature rule to approximate  the convolution term on the right-hand side of \eqref{qwl03}
\begin{equation}\label{qwl07}
\begin{split}
     (K * B u) (\bm x,t_n) & = \sum\limits_{j=1}^{n} \int_{t_{j-1}}^{t_j} K(s) B u (\bm x,t_n-s) ds \approx \sum\limits_{j=1}^{n} w_j B u^{n-j}
\end{split}
\end{equation}
with the weight
\begin{equation}\label{qwl08}
\begin{split}
     w_j := \int_{t_{j-1}}^{t_j} K(s)ds \leq \kappa \mu_{\beta,j}, \quad  1\leq j \leq n, \quad  0<\beta< \min(1,2\sigma),
\end{split}
\end{equation}
where
\begin{align}
    \mu_{\beta,j} := \int_{t_{j-1}}^{t_j} s^{-\beta} ds = \frac{t_j^{1-\beta} - t_{j-1}^{1-\beta} }{1-\beta}. \label{qwl0801}
\end{align}
The quadrature error of the numerical approximation \eqref{qwl07} could be expressed as
\begin{equation}\label{qwl09}
  \begin{split}
      (R_2)^n = \sum\limits_{j=1}^{n}\int_{t_{j-1}}^{t_j}K(s)   \int_{s}^{t_j} \p_t\Delta u(\bm x,t_n-\theta)d\theta  ds.
  \end{split}
\end{equation}
We then substitute \eqref{qwl04} and \eqref{qwl07} into \eqref{qwl03} to get
\begin{equation}\label{qwl010}
  \begin{split}
      D_N^{\sigma} u^n - \Delta u^n = \sum\limits_{j=1}^{n} w_j B u^{n-j} + f^n + R^n, \quad n = 1,2,\cdots,N
  \end{split}
\end{equation}
with $f^n=f(\bm x,t_n)$ and $R^n=(R_1)^n+(R_2)^n$. Omit the truncation error $R^n$ and replace $u^n$ with its approximation $U^n$ to get the following time semi-discrete scheme
\begin{align}
    & D_N^{\sigma} U^n - \Delta U^n = \sum\limits_{j=1}^{n} w_j B U^{n-j} + f^n, \quad n\geq 1, \label{qwl011} \\
    & U^0 = u_0. \label{qwl012}
\end{align}

\subsection{Stability analysis}
We deduce the stability of the time semi-discrete scheme  \eqref{qwl011}--\eqref{qwl012}. To facilitate analysis, we introduce the following complementary discrete convolution kernel \cite{Liao,Liao1}
\begin{equation}\label{doc}
   P_{n-k}^{(n)} = \frac{1}{a_0^{(k)}}
  \begin{cases}
    1, & k=n, \\
    \sum\limits_{j=k+1}^{n}\left( a_{j-k-1}^{(j)} - a_{j-k}^{(j)}  \right)P_{n-j}^{(n)}, & 1,2,\cdots,n-1.
  \end{cases}
\end{equation}

\begin{lemma}\cite{Liao,Liao1} \label{lem01}
   The complementary discrete convolution kernel $ P_{n-k}^{(n)}$ satisfies the following properties: \par\vskip 1mm
   (i)  $0< P_{n-k}^{(n)} \leq \Gamma(2-\sigma) \tau_k^{\sigma}$ for $1\leq k \leq n$; \par \vskip 1mm
   (ii) $\sum_{j=k}^{n}P_{n-k}^{(n)}a_{j-k}^{(j)}=1$ for $1\leq k \leq n$; \par \vskip 1mm
   (iii) For $q=0,1$ it holds
   \begin{align*}
      \sum_{k=1}^{n}  P_{n-k}^{(n)} \tilde{k}_{1+q\sigma - \sigma}(t_k) \leq \tilde{k}_{1+q\sigma}(t_n), \quad 1\leq n \leq N.
   \end{align*}
\end{lemma}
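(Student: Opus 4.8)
The plan is to deduce all three properties from two elementary facts about the nonuniform $L_1$ weights in \eqref{qwl05}: for each fixed upper index $n$ the finite sequence $\{a_j^{(n)}\}_{j=0}^{n-1}$ is strictly positive and strictly decreasing in $j$, since $a_{n-k}^{(n)}=\frac{1}{\tau_k}\int_{t_{k-1}}^{t_k}\tilde{k}_{1-\sigma}(t_n-s)\,ds$ is an average of the positive, strictly decreasing function $\tilde{k}_{1-\sigma}$ over intervals that slide toward $t_n$ as $k$ increases; and $a_0^{(k)}=\tilde{k}_{2-\sigma}(\tau_k)/\tau_k=\tau_k^{-\sigma}/\Gamma(2-\sigma)$ by direct evaluation in \eqref{qwl05}.

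First I would establish (ii) purely algebraically. Set $S_k:=\sum_{j=k}^{n}P_{n-j}^{(n)}a_{j-k}^{(j)}$; the definition \eqref{doc} gives $S_n=P_0^{(n)}a_0^{(n)}=1$, and for $1\le k\le n-1$ I would substitute the recursion \eqref{doc} for the leading term $P_{n-k}^{(n)}a_0^{(k)}$ and merge the two sums, so that the coefficients collapse via $(a_{j-k-1}^{(j)}-a_{j-k}^{(j)})+a_{j-k}^{(j)}=a_{j-(k+1)}^{(j)}$ and hence $S_k=S_{k+1}$; this forces $S_k\equiv1$, which is (ii). Positivity of $P_{n-k}^{(n)}$ then follows by induction on $n-k$: $P_0^{(n)}=1/a_0^{(n)}>0$, and if $P_{n-j}^{(n)}\ge0$ for $j>k$ with the $j=n$ term strictly positive, then in \eqref{doc} each coefficient $a_{j-k-1}^{(j)}-a_{j-k}^{(j)}$ is nonnegative by the monotonicity noted above and the $j=n$ summand is strictly positive, so $P_{n-k}^{(n)}>0$. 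Property (i) is then immediate from (ii): every summand of $\sum_{j=k}^{n}P_{n-j}^{(n)}a_{j-k}^{(j)}=1$ is nonnegative, so the $j=k$ term obeys $P_{n-k}^{(n)}a_0^{(k)}\le1$, i.e. $P_{n-k}^{(n)}\le 1/a_0^{(k)}=\Gamma(2-\sigma)\tau_k^{\sigma}$.

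For (iii) with $q=0$ I would take $k=1$ in (ii): since $\tilde{k}_{1-\sigma}$ is decreasing, $a_{j-1}^{(j)}=\frac{1}{\tau_1}\int_{0}^{t_1}\tilde{k}_{1-\sigma}(t_j-s)\,ds\ge\tilde{k}_{1-\sigma}(t_j)$, hence $1=\sum_{j=1}^{n}P_{n-j}^{(n)}a_{j-1}^{(j)}\ge\sum_{k=1}^{n}P_{n-k}^{(n)}\tilde{k}_{1-\sigma}(t_k)$ and $\tilde{k}_1(t_n)=1$. For $q=1$ I would use that (ii) is equivalent to the inversion identity $\sum_{k=1}^{n}P_{n-k}^{(n)}D_N^{\sigma}v^k=v^n-v^0$ for any grid function $\{v^k\}$ (swap the order of summation and apply (ii)). Taking $v^k=\tilde{k}_{1+\sigma}(t_k)$, so that $\delta_{\tau}v^k=\int_{t_{k-1}}^{t_k}\tilde{k}_{\sigma}(r)\,dr$, and noting that $s\mapsto\tilde{k}_{1-\sigma}(t_n-s)$ is increasing while $r\mapsto\tilde{k}_{\sigma}(r)$ is decreasing on each $[t_{k-1},t_k]$, Chebyshev's integral correlation inequality gives $a_{n-k}^{(n)}\delta_{\tau}v^k=\Big(\frac{1}{\tau_k}\int_{t_{k-1}}^{t_k}\tilde{k}_{1-\sigma}(t_n-s)\,ds\Big)\Big(\int_{t_{k-1}}^{t_k}\tilde{k}_{\sigma}(r)\,dr\Big)\ge\int_{t_{k-1}}^{t_k}\tilde{k}_{1-\sigma}(t_n-r)\tilde{k}_{\sigma}(r)\,dr$; summing over $k$ and invoking the semigroup identity $\tilde{k}_{1-\sigma}*\tilde{k}_{\sigma}=\tilde{k}_1\equiv1$ yields $D_N^{\sigma}v^n\ge1$ for every $n$. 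Plugging this and $P_{n-k}^{(n)}>0$ into the inversion identity gives $\sum_{k=1}^{n}P_{n-k}^{(n)}\le\sum_{k=1}^{n}P_{n-k}^{(n)}D_N^{\sigma}v^k=\tilde{k}_{1+\sigma}(t_n)$, which is (iii) with $q=1$ because $\tilde{k}_{1+q\sigma-\sigma}(t_k)=\tilde{k}_1(t_k)=1$.

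The only genuinely delicate point is (iii) for $q=1$: one must identify the correct comparison function $\tilde{k}_{1+\sigma}$ (whose continuous Caputo $\sigma$-derivative equals $1$) and argue that the nonuniform $L_1$ quadrature never undershoots it, for which the monotonicity-driven Chebyshev-type inequality is the crucial device; everything else is bookkeeping around the telescoping identity for $S_k$. These properties are precisely those established in \cite{Liao,Liao1}.
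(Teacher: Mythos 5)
Your argument is correct, but note that the paper itself gives no proof of this lemma --- it is quoted verbatim from the cited references \cite{Liao,Liao1}, so there is nothing internal to compare against. Your reconstruction (telescoping $S_k=S_{k+1}$ for (ii), positivity and the bound $P_{n-k}^{(n)}\le 1/a_0^{(k)}=\Gamma(2-\sigma)\tau_k^{\sigma}$ from the monotonicity of the $L_1$ weights, and the inversion identity applied to $v^k=\tilde{k}_{1+\sigma}(t_k)$ together with $\tilde{k}_{1-\sigma}*\tilde{k}_{\sigma}\equiv 1$ for (iii)) is a faithful and complete version of the standard argument in those references; the Chebyshev correlation inequality is a clean way to package the lower bound $D_N^{\sigma}v^n\ge 1$ that Liao et al.\ obtain by direct monotonicity estimates.
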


Then we proceed to analyze the global consistency error. The result of the following lemma could be derived from \cite[Lemma 3.3]{Liao}. However, for the case of graded meshes, we provide a simplified proof of this result.

\begin{lemma} \label{lem02}
  Under regularity assumption \eqref{regu},  we have
   \begin{align*}
       \sum_{n=1}^{m}  P_{m-n}^{(m)} \left\|(R_1)^n\right\| \leq Q(T)N^{-\min(2-\sigma,\gamma\sigma)}, \quad 1\leq n \leq  m \leq N.
   \end{align*}
\end{lemma}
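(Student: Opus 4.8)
Here is how I would attack the lemma. The guiding idea is to separate, on every time level $n$, the first temporal subinterval $(0,t_1]$ from the rest, to apply the two bounds in \eqref{regu} to these two pieces \emph{separately}, and then to collapse the resulting $P$‑weighted double sum by means of the convolution identities in Lemma~\ref{lem01}; the graded mesh is only used at the very end.

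First I would isolate the term $n=1$. From \eqref{qwl04}, $(R_1)^1=\int_{0}^{t_1}\tilde k_{1-\sigma}(t_1-s)\big(\delta_\tau u^1/\tau_1-\p_t u(\cdot,s)\big)\,ds$; writing $\delta_\tau u^1/\tau_1=\tau_1^{-1}\int_0^{t_1}\p_t u(\cdot,\theta)\,d\theta$ and using only $\|\p_t u(\cdot,t)\|\le Q t^{\sigma-1}$ — the bound on $\|\p_t^2 u\|$ is useless here, since $t^{\sigma-2}$ is not integrable at $0$ — gives $\|\delta_\tau u^1/\tau_1-\p_t u(\cdot,s)\|\le Q(t_1^{\sigma-1}+s^{\sigma-1})$ and hence $\|(R_1)^1\|\le Q\int_0^{t_1}\tilde k_{1-\sigma}(t_1-s)(t_1^{\sigma-1}+s^{\sigma-1})\,ds\le Q$. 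With Lemma~\ref{lem01}(i) this already gives $P_{m-1}^{(m)}\|(R_1)^1\|\le\Gamma(2-\sigma)\tau_1^\sigma\,Q=QT^\sigma N^{-\gamma\sigma}\le Q(T)N^{-\min(2-\sigma,\gamma\sigma)}$. For $2\le n\le m$ I would use $(R_1)^n=\sum_{k=1}^n\int_{t_{k-1}}^{t_k}\tilde k_{1-\sigma}(t_n-s)\big(\delta_\tau u^k/\tau_k-\p_t u(\cdot,s)\big)\,ds$. On the first subinterval the same estimate applies, but now $t_n-s\ge t_n-t_1\ge\tfrac12 t_n$ (since $t_2\ge 2^\gamma t_1$ for $\gamma\ge1$), so $\tilde k_{1-\sigma}(t_n-s)\le 2^\sigma\tilde k_{1-\sigma}(t_n)$ and that subinterval contributes at most $Q t_1^\sigma t_n^{-\sigma}$. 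On $(t_{k-1},t_k]$ with $k\ge2$ I would use $\|\delta_\tau u^k/\tau_k-\p_t u(\cdot,s)\|\le Q\int_{t_{k-1}}^{t_k}\|\p_t^2 u(\cdot,\xi)\|\,d\xi=:Qe_k\le Q\tau_k t_{k-1}^{\sigma-2}$, so by \eqref{qwl05} the $k$‑th subinterval contributes at most $Qe_k\int_{t_{k-1}}^{t_k}\tilde k_{1-\sigma}(t_n-s)\,ds=Qe_k\tau_k a_{n-k}^{(n)}$; when a distant subinterval needs sharper control one can also use the zero‑mean identity $\int_{t_{k-1}}^{t_k}\big(\delta_\tau u^k/\tau_k-\p_t u(\cdot,s)\big)\,ds=0$, which lets $a_{n-k}^{(n)}$ be traded for a quantity carrying an extra factor $\tau_k/(t_n-t_k)$ for $n>k$.

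Substituting these bounds into $\sum_{n=1}^m P_{m-n}^{(m)}\|(R_1)^n\|$ and interchanging the order of summation, the first‑subinterval part is $\le Q t_1^\sigma\sum_{n=1}^m P_{m-n}^{(m)}t_n^{-\sigma}=Q\Gamma(1-\sigma)t_1^\sigma\sum_{n=1}^m P_{m-n}^{(m)}\tilde k_{1-\sigma}(t_n)\le Q\Gamma(1-\sigma)t_1^\sigma\le Q(T)N^{-\gamma\sigma}$ by Lemma~\ref{lem01}(iii) with $q=0$; and the $k\ge2$ part, after $\sum_{n=1}^m\sum_{k=2}^n=\sum_{k=2}^m\sum_{n=k}^m$ and the identity $\sum_{n=k}^m P_{m-n}^{(m)}a_{n-k}^{(n)}=1$ of Lemma~\ref{lem01}(ii), collapses to $Q\sum_{k=2}^m e_k\tau_k$. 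It then remains to show $\sum_{k=2}^m e_k\tau_k\le Q(T)N^{-\min(2-\sigma,\gamma\sigma)}$, and this I expect to be the decisive and hardest step. Inserting $e_k\le\tau_k t_{k-1}^{\sigma-2}$ together with the elementary consequences $\tau_k\le\gamma t_k/k$ and $t_{k-1}\ge 2^{-\gamma}t_k$ of $t_k=T(k/N)^\gamma$ leads to $e_k\tau_k\le Q(T)N^{-\gamma\sigma}k^{\gamma\sigma-2}$, so that one is reduced to the scalar sum $\sum_{k=2}^m k^{\gamma\sigma-2}$, which must be treated separately in the regimes $\gamma\sigma<2-\sigma$ and $\gamma\sigma\ge2-\sigma$ (the borderline case producing at worst a logarithm that is absorbed into $Q(T)$). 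The real obstacle is precisely this coupling of the solution‑regularity exponent $\sigma$ with the grading parameter $\gamma$: a purely crude summation does not deliver the sharp exponent $\min(2-\sigma,\gamma\sigma)$, and to recover it one has to retain the smoothness of the discrete kernel on the interior subintervals — i.e. the zero‑mean cancellation noted above — and push it through properties (ii)–(iii) of Lemma~\ref{lem01}. This is exactly the place where the argument goes beyond the unweighted truncation estimate \eqref{qwl06} and genuinely exploits the complementary convolution kernel.
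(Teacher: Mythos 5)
Your route is genuinely different from the paper's, and it has a gap at exactly the step you yourself flag as decisive. The paper does not re-derive the local truncation error at all: it simply invokes the known pointwise bound \eqref{qwl06}, $\|(R_1)^n\|\le Qn^{-\tilde q}$ with $\tilde q=\min(2-\sigma,\gamma\sigma)$, rewrites $n^{-\tilde q}=QT^{\tilde q/\gamma}N^{-\tilde q}t_n^{-\tilde q/\gamma}$ using $t_n=T(n/N)^\gamma$, notes that $\tilde q/\gamma=\min((2-\sigma)/\gamma,\sigma)\le\sigma$ so that $t_n^{-\tilde q/\gamma}\le Q(T)t_n^{-\sigma}$, and closes with Lemma~\ref{lem01}(iii) ($q=0$), giving $\sum_nP^{(m)}_{m-n}t_n^{-\sigma}\le\Gamma(1-\sigma)$. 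That is the whole proof. You instead rebuild $(R_1)^n$ from scratch, collapse the double sum with Lemma~\ref{lem01}(ii), and reduce everything to $\sum_{k\ge2}e_k\tau_k$ with $e_k\tau_k\le Q(T)N^{-\gamma\sigma}k^{\gamma\sigma-2}$.

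The gap: the scalar sum $\sum_{k=2}^mk^{\gamma\sigma-2}$ changes behavior at $\gamma\sigma=1$, not at $\gamma\sigma=2-\sigma$ as you assert. For $\gamma\sigma>1$ it is of order $m^{\gamma\sigma-1}\le N^{\gamma\sigma-1}$, so your bound degenerates to $Q(T)N^{-1}$, whereas the target exponent $\min(2-\sigma,\gamma\sigma)$ can strictly exceed $1$ (e.g. $\sigma=1/2$, $\gamma=3$ gives $\min(2-\sigma,\gamma\sigma)=3/2$). Worse, the lemma is used downstream precisely with $\gamma\ge1/\sigma$, i.e. $\gamma\sigma\ge1$, so the regime where your crude reduction fails is the only regime that matters. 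The loss occurs when Lemma~\ref{lem01}(ii) replaces $\sum_{n\ge k}P^{(m)}_{m-n}a^{(n)}_{n-k}$ by $1$: this discards the decay of the discrete kernel, and recovering the sharp exponent then requires the summation-by-parts/zero-mean cancellation argument that constitutes the actual proof of \eqref{qwl06} in [Stynes] and of Lemma~3.3 in [Liao]. You name that cancellation but do not carry it out, so the decisive inequality $\sum_{k=2}^me_k\tau_k\le Q(T)N^{-\min(2-\sigma,\gamma\sigma)}$ is neither proved nor, as stated with the crude bound, true. The fix is either to execute that refined consistency analysis or, as the paper does, to take \eqref{qwl06} as given and spend the effort only on the weighted sum.
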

\begin{proof} Let $\tilde{q}=\min(2-\sigma,\gamma\sigma)$ and we use \eqref{qwl06} to get
\begin{equation*}
 \begin{split}
    \sum\limits_{n=1}^{m}  P_{m-n}^{(m)} \left\|(R_1)^n\right\|  & \leq Q \sum_{n=1}^{m}  P_{m-n}^{(m)} n^{-\tilde{q}} \\
    & = Q \sum\limits_{n=1}^{m}  P_{m-n}^{(m)} \left[ T \left(\frac{n}{N}\right)^{\gamma} \right]^{-\tilde{q}/\gamma} N^{-\tilde{q}} T^{\tilde{q}/\gamma} \\
    & = QN^{-\min(2-\sigma,\gamma\sigma)} T^{\tilde{q}/\gamma}  \sum\limits_{n=1}^{m}  P_{m-n}^{(m)} t_n^{-\min\left(\frac{2-\sigma}{\gamma},\sigma\right)}.
 \end{split}
\end{equation*}
We combine this with
\begin{equation*}
   \sum\limits_{n=1}^{m}  P_{m-n}^{(m)} t_n^{-\min\left ( \frac{2-\sigma}{\gamma},\sigma\right)} =
   \begin{cases}
     \sum\limits_{n=1}^{m}  P_{m-n}^{(m)} t_n^{-\frac{2-\sigma}{\gamma}} = \sum\limits_{n=1}^{m}  P_{m-n}^{(m)} t_n^{\sigma-\frac{2-\sigma}{\gamma}} t_n^{-\sigma}, & \frac{2-\sigma}{\gamma}< \sigma, \\
     \sum\limits_{n=1}^{m}  P_{m-n}^{(m)} t_n^{-\sigma}, & \frac{2-\sigma}{\gamma} \geq  \sigma
   \end{cases}
\end{equation*}
and
\begin{align*}
      \sum\limits_{n=1}^{m}  P_{m-n}^{(m)} t_n^{-\sigma} &= \Gamma(1-\sigma) \sum\limits_{n=1}^{m}  P_{m-n}^{(m)} \tilde{k}_{1-\sigma}(t_n) \\
      & \leq \Gamma(1-\sigma) \tilde{k}_{1}(t_m) = \Gamma(1-\sigma)
\end{align*}
to complete the proof of the lemma.
\end{proof}

Next, we introduce a key discrete Gr\"{o}nwall lemma as follows.
\begin{lemma} \cite[Lemma 6]{Chen}  \label{lem03}
   Let $\mu_{\beta,j}$ be defined by \eqref{qwl0801}. Suppose that $z_n\geq 0$ and satisfies
   \begin{align*}
      z_n \leq y_n + Q_0 \sum_{j=0}^{n-1} \mu_{\beta,n-j} z_j, \quad n\geq 0,
   \end{align*}
in which $y_n\geq 0$ and $Q_0\geq 0$. Then for any $0<T<\infty$, it holds that
   \begin{align*}
      z_n \leq y_n + Q(T) \sum_{j=0}^{n-1} \mu_{\beta,n-j} y_j, \quad n\leq N.
   \end{align*}
\end{lemma}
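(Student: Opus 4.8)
The plan is to read the conclusion as a bound on the discrete resolvent kernel generated by $\{\mu_{\beta,j}\}$, and to control that kernel by the self-convolutions of the continuous weakly singular kernel $s^{-\beta}$. Write the hypothesis componentwise as $z \le y + Q_0\,\mathcal{K} z$ (all quantities being nonnegative), where $(\mathcal{K}v)_n := \sum_{j=0}^{n-1}\mu_{\beta,n-j}v_j$; since $\mathcal{K}$ has nonnegative entries it preserves $\le$, and since the $n$-th entry of $\mathcal{K}v$ involves only $v_0,\dots,v_{n-1}$, the operator $\mathcal{K}^{m}$ annihilates the $n$-th entry once $m>n$. Hence iterating the inequality is a finite process and gives
\begin{equation*}
  z_n \le y_n + \sum_{m=1}^{n}Q_0^m(\mathcal{K}^m y)_n = y_n + \sum_{i=0}^{n-1}\Big(\sum_{m\ge1}Q_0^m R^{(m)}_{n-i}\Big)y_i,
\end{equation*}
where $R^{(1)}_j := \mu_{\beta,j}$ and $R^{(m+1)}_j := \sum_{l=1}^{j-1}\mu_{\beta,j-l}R^{(m)}_l$ is the $m$-fold discrete convolution of the singular weights (with $R^{(m)}_j=0$ for $j<m$). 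It therefore suffices to prove the kernel bound $\sum_{m\ge1}Q_0^m R^{(m)}_\ell \le Q(T)\,\mu_{\beta,\ell}$ for $1\le\ell\le N$.

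The crux is to show, by induction on $m$, that there is a constant $Q_\beta$ depending only on $\beta,\gamma,T$ with
\begin{equation*}
  R^{(m)}_\ell \le \frac{Q_\beta^{\,m-1}\,t_\ell^{(m-1)(1-\beta)}}{\Gamma\!\big((m-1)(1-\beta)+1\big)}\,\mu_{\beta,\ell}, \qquad m\ge1,\ 1\le\ell\le N .
\end{equation*}
The case $m=1$ is an equality. For the inductive step I would insert the hypothesis into $R^{(m+1)}_\ell=\sum_{l=1}^{\ell-1}\mu_{\beta,\ell-l}R^{(m)}_l$ and use three ingredients: (i) $\mu_{\beta,j}=\int_{t_{j-1}}^{t_j}s^{-\beta}\,\d s$ with $s\mapsto s^{-\beta}$ positive and decreasing, together with the telescoping identity $\sum_{j=1}^{\ell}\mu_{\beta,j}=t_\ell^{1-\beta}/(1-\beta)$ and the bounded node ratios $t_l\le 2^{\gamma}t_{l-1}$; (ii) the superadditivity $t_a+t_b\le t_{a+b}$ of the graded nodes, valid because $n\mapsto t_n=T(n/N)^\gamma$ is convex with $t_0=0$ for $\gamma\ge1$, which lets the discrete convolution be compared, interval by interval, with the continuous convolution of $s^{-\beta}$ against the inductive bound over $[0,t_\ell]$; and (iii) the semigroup relation $\tilde k_{1-\beta}*\tilde k_{\theta}=\tilde k_{1-\beta+\theta}$, equivalently $\int_0^t(t-s)^{-\beta}s^{\theta-1}\,\d s=\frac{\Gamma(1-\beta)\Gamma(\theta)}{\Gamma(1-\beta+\theta)}t^{\theta-\beta}$, whose nested use reproduces exactly the normalization $t_\ell^{m(1-\beta)}/\Gamma(m(1-\beta)+1)$ and leaves a single factor $\mu_{\beta,\ell}$ in front. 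The first subinterval $l=1$, on which $\mu_{\beta,1}$ absorbs the whole integrable singularity of $s^{-\beta}$, must be handled separately and is responsible for the growth of $Q_\beta$ (hence of $Q(T)$) as $\beta\uparrow1$.

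Granting this bound, the series over $m$ collapses to a Mittag–Leffler function: with $E_\alpha(z)=\sum_{k\ge0}z^k/\Gamma(\alpha k+1)$,
\begin{equation*}
  \sum_{m\ge1}Q_0^m R^{(m)}_\ell \le Q_0\,\mu_{\beta,\ell}\sum_{k\ge0}\frac{\big(Q_0 Q_\beta t_\ell^{1-\beta}\big)^{k}}{\Gamma\!\big(k(1-\beta)+1\big)} = Q_0\,E_{1-\beta}\!\big(Q_0 Q_\beta t_\ell^{1-\beta}\big)\,\mu_{\beta,\ell} \le Q(T)\,\mu_{\beta,\ell},
\end{equation*}
with $Q(T):=Q_0\,E_{1-\beta}(Q_0 Q_\beta T^{1-\beta})$, which is finite precisely because $0<\beta<1$ makes $1-\beta\in(0,1)$ and $E_{1-\beta}$ an entire function; substituting this into the iterated inequality gives the assertion. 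The hard part is the inductive kernel estimate of the second paragraph: the delicate point is to peel off the single factor $\mu_{\beta,\ell}$ while retaining the correct $1/\Gamma(m(1-\beta))$–type decay in $m$, since a cruder cumulative comparison $R^{(m)}_\ell\lesssim t_\ell^{m(1-\beta)}$ loses that decay and yields only a geometric series, hence the spurious restriction $Q_0 T^{1-\beta}\ll1$; preserving the decay is exactly what the continuous-convolution comparison, powered by the node convexity and the semigroup identity, achieves.
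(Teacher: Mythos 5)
The paper offers no proof of this lemma: it is quoted from \cite[Lemma~6]{Chen}, where it is stated and proved for \emph{uniform} time steps, whereas here $\mu_{\beta,j}$ is built on the graded nodes $t_n=T(n/N)^{\gamma}$. Your instinct to actually prove the graded-mesh version is therefore the right one, and your skeleton is the standard and correct one: the finite Neumann iteration $z_n\le\sum_{m=0}^{n}Q_0^m(\mathcal K^m y)_n$, the reduction of the whole lemma to the resolvent-kernel bound $\sum_{m\ge1}Q_0^mR^{(m)}_\ell\le Q(T)\mu_{\beta,\ell}$ (which is in fact \emph{equivalent} to the lemma, as one sees by taking $y$ to be an indicator sequence and $z$ the maximal solution), and the Mittag--Leffler summation at the end are all sound.

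The gap is precisely where you flag it, and the mechanism you propose does not close it. Superadditivity $t_a+t_b\le t_{a+b}$ only places each rectangle $[t_{\ell-l-1},t_{\ell-l}]\times[t_{l-1},t_l]$ inside the simplex $\{s+r\le t_\ell\}$, so ingredients (ii)--(iii) deliver
\begin{equation*}
R^{(m)}_\ell\;\le\;\frac{\Gamma(1-\beta)^{m}}{\Gamma\bigl(m(1-\beta)+1\bigr)}\,t_\ell^{\,m(1-\beta)},
\end{equation*}
i.e.\ the continuous $m$-fold convolution integrated over \emph{all} of $[0,t_\ell]$; they cannot ``leave a single factor $\mu_{\beta,\ell}$ in front.'' There is no reverse superadditivity for $\gamma>1$ (e.g.\ $t_4+t_4<t_7$ when $\gamma=2$), so the rectangles are not confined to a band near the hypotenuse $s+r=t_\ell$, and $t_\ell^{1-\beta}$ exceeds $\mu_{\beta,\ell}=(t_\ell^{1-\beta}-t_{\ell-1}^{1-\beta})/(1-\beta)$ by a factor of order $\ell$. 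Concretely, inserting your inductive hypothesis into $R^{(m+1)}_\ell=\sum_l\mu_{\beta,\ell-l}R^{(m)}_l$ and running (i)--(iii) produces a bound proportional to $t_\ell^{\,m(1-\beta)}\cdot t_\ell^{1-\beta}$, not to $t_\ell^{\,m(1-\beta)}\mu_{\beta,\ell}$, so the induction does not close. The estimate you want is true, but it must be proved discretely: writing $\mu_{\beta,j}=\tfrac{T^{1-\beta}}{(1-\beta)N^{\nu}}d_j$ with $\nu=\gamma(1-\beta)$ and $d_j=j^{\nu}-(j-1)^{\nu}$, split the convolution sum at $l=\lceil\ell/2\rceil$ and use the comparisons $d_{\ell-l}\le C(\nu)\,d_\ell$ for $l\le\ell/2$ (monotonicity of $d$ when $\nu\ge1$; $d_j\asymp\nu j^{\nu-1}$ when $\nu\le1$) together with $\sum_{l\le\ell}d_l=\ell^{\nu}$ and the Beta-function identity on the remaining half. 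That is what actually peels off $d_\ell$ while preserving the $1/\Gamma$ decay. Until this computation is carried out, the central estimate of your proof is asserted rather than proved.
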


We then present the stability result of the scheme \eqref{qwl011}--\eqref{qwl012}.

\begin{theorem}\label{thm4.1}
  Let $U^m$ be the solution of the scheme \eqref{qwl011}--\eqref{qwl012}. The following stability estimates hold
  \begin{align}
      \|U^m\|^2 \leq Q(T) \left( \|U^0\|^2 + N^{-\sigma} \|\nabla U^{0}\|^2 +  \max\limits_{1\leq n \leq m}\|f^n\|^2 \right), \quad 1\leq m \leq N.  \nonumber
    \end{align}
\end{theorem}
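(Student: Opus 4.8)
The plan is to test the scheme \eqref{qwl011} with $U^n$ in the $L^2(\Omega)$ inner product, sum against the complementary kernel $P_{m-n}^{(m)}$, and then reduce the memory term to a form that the discrete Gr\"onwall inequality of Lemma~\ref{lem03} can absorb. First I would pair \eqref{qwl011} with $U^n$ and use the standard positivity/telescoping property of the $L_1$ kernel together with the elementary inequality $\langle \delta_\tau U^n, U^n\rangle \ge \tfrac12\delta_\tau\|U^n\|^2$ to bound $\sum_{k=1}^n a^{(n)}_{n-k}\langle\delta_\tau U^k,U^k\rangle$ from below by $\tfrac12 D_N^\sigma\|U^n\|^2$ (this is the discrete fractional Gr\"onwall mechanism of \cite{Liao}). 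The diffusion term contributes $\|\nabla U^n\|^2$ with a favorable sign. On the right-hand side, the term $\langle Bu^{n-j}, U^n\rangle = \mu_1\langle\Delta U^{n-j},U^n\rangle + \mu_2\langle U^{n-j},U^n\rangle$ is estimated by integration by parts: $\mu_1\langle\Delta U^{n-j},U^n\rangle = -\mu_1\langle\nabla U^{n-j},\nabla U^n\rangle$, and then Cauchy--Schwarz plus Young's inequality splits everything into $\|\nabla U^{n-j}\|^2$, $\|\nabla U^n\|^2$, $\|U^{n-j}\|^2$, $\|U^n\|^2$ pieces, with the $\|\nabla U^n\|^2$ part taken small enough to be absorbed by the diffusion term (using $w_j\le\kappa\mu_{\beta,j}$ and $\sum_j \mu_{\beta,j}\le QT^{1-\beta}$).

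The crux is then to convert the resulting discrete fractional inequality
\[
\tfrac12 D_N^\sigma\|U^n\|^2 + c\|\nabla U^n\|^2 \le Q\sum_{j=1}^{n}\mu_{\beta,j}\big(\|U^{n-j}\|^2 + \|\nabla U^{n-j}\|^2\big) + Q\|f^n\|^2
\]
into an explicit bound. Multiplying by $P_{m-n}^{(m)}$, summing in $n$ from $1$ to $m$, and using Lemma~\ref{lem01}(ii) to collapse $\sum_{n\ge k}P_{m-n}^{(m)}a^{(n)}_{n-k}=1$ (the telescoping identity that is the whole point of the complementary kernel), the left side yields $\tfrac12\big(\|U^m\|^2-\|U^0\|^2\big)$ plus a nonnegative $\nabla$-contribution weighted by $P_{m-n}^{(m)}$. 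For the source and $\nabla U^0$ terms I would invoke Lemma~\ref{lem01}(i), $P_{m-n}^{(m)}\le\Gamma(2-\sigma)\tau_n^\sigma$, together with $\sum_n\tau_n^\sigma\le QN^{-\sigma}$ on the graded mesh (or the sharper Lemma~\ref{lem01}(iii) with $q=0$ to handle $\sum_n P_{m-n}^{(m)}$ cleanly), which is exactly where the factor $N^{-\sigma}\|\nabla U^0\|^2$ and the $\max_n\|f^n\|^2$ enter. The key coupling difficulty — that the memory kernel exponent $\beta$ and the fractional exponent $\sigma$ interact through the double summation $\sum_n P_{m-n}^{(m)}\sum_j\mu_{\beta,j}(\cdots)$ — is resolved by exchanging the order of summation and noting that for each fixed index the inner sum over $P_{m-n}^{(m)}$ is uniformly bounded, reducing everything to the $\mu_{\beta,\cdot}$-convolution form required by Lemma~\ref{lem03}.

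Applying Lemma~\ref{lem03} with $z_n = \|U^n\|^2$ (and $\|\nabla U^n\|^2$ folded in via the same bound, since the $\nabla$-terms on the right are controlled by the absorbed diffusion term after the $P$-summation) then gives $\|U^m\|^2 \le y_m + Q(T)\sum_{j=0}^{m-1}\mu_{\beta,m-j}y_j$ with $y_n = Q(\|U^0\|^2 + N^{-\sigma}\|\nabla U^0\|^2 + \max_k\|f^k\|^2)$; since $\sum_j\mu_{\beta,j}\le QT^{1-\beta}$ is finite, this is precisely the claimed estimate. I expect the main obstacle to be the careful bookkeeping at the step where the $\|\nabla U^n\|^2$ and $\|\nabla U^{n-j}\|^2$ contributions from the $\mu_1\Delta$ part of $B$ must be simultaneously absorbed: one needs the coefficient in front of $\|\nabla U^n\|^2$ coming from Young's inequality, after weighting by $P_{m-n}^{(m)}$ and summing, to stay strictly below the $1$ furnished by the diffusion term via Lemma~\ref{lem01}(ii) — this forces a quantitative use of $w_j\le\kappa\mu_{\beta,j}$ and the smallness of $\sum_j\mu_{\beta,j}$ on short time intervals, combined with a possible restriction absorbed into $Q(T)$, rather than any genuinely new estimate.
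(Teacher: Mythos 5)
Your overall strategy coincides with the paper's: test \eqref{qwl011} with $U^n$, weight by $P^{(m)}_{m-n}$ and sum, telescope the fractional term via Lemma~\ref{lem01}(ii), absorb part of the memory term into the diffusion term by Young's inequality using $\sum_j|w_j|\le \kappa T^{1-\beta}/(1-\beta)$, and close with the weakly singular Gr\"onwall inequality of Lemma~\ref{lem03}. The source term and the origin of the $N^{-\sigma}\|\nabla U^0\|^2$ contribution (Lemma~\ref{lem01}(i) applied to the $j=n$ summand) are also handled as in the paper.

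The one step that, as written, would fail is your treatment of the history part of the memory term, i.e.\ of
\[
\Phi_2 \;=\; Q\sum_{n=1}^{m} P^{(m)}_{m-n}\sum_{j=1}^{n}|w_j|\,\|\nabla U^{n-j}\|^2 .
\]
You propose to exchange the order of summation and then use that ``the inner sum over $P^{(m)}_{m-n}$ is uniformly bounded.'' That discards the $P$-weights attached to the history gradients and leaves you with quantities of the type $\sum_j|w_j|\max_\ell\|\nabla U^\ell\|^2$; but individual values $\|\nabla U^\ell\|^2$ are never controlled by this scheme --- only the weighted sums $\sum_{\ell\le n}P^{(n)}_{n-\ell}\|\nabla U^\ell\|^2$ survive (with a positive coefficient) on the left-hand side after absorption. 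To feed Lemma~\ref{lem03} you must keep the $P$-weights on the history gradients: after exchanging the summation order one re-indexes $\ell=n-j$ so that $\sum_{n=j+1}^{m}P^{(m)}_{m-n}\|\nabla U^{n-j}\|^2$ is identified with $\sum_{\ell=1}^{m-j}P^{(m-j)}_{(m-j)-\ell}\|\nabla U^{\ell}\|^2$, whence $\Phi_2\le Q\sum_{n=0}^{m-1}\mu_{\beta,m-n}\bigl(\sum_{\ell=1}^{n}P^{(n)}_{n-\ell}\|\nabla U^\ell\|^2\bigr)$ plus the $j=n$ term carrying $\|\nabla U^0\|^2$. Only then can the Gr\"onwall variable be taken as $z_n=\|U^n\|^2+\tfrac12\sum_{\ell=1}^{n}P^{(n)}_{n-\ell}\|\nabla U^\ell\|^2$ --- exactly the quantity the left-hand side produces --- and Lemma~\ref{lem03} invoked. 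Your closing remark about ``folding in'' the gradient terms points in the right direction, but without this re-indexing of the complementary kernel the inequality does not close. A related bookkeeping point: you must not absorb the entire diffusion term; a fixed positive fraction of $\sum_n P^{(m)}_{m-n}\|\nabla U^n\|^2$ has to remain on the left (the paper keeps $\tfrac12$ by choosing $\varepsilon^{*}=(1-\beta)T^{\beta-1}/\kappa$) precisely so that $z_n$ is controlled.
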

\begin{proof}
    We first take the inner product of \eqref{qwl011} with $2U^n$ to obtain
 \begin{align}
     2 \left( D_{N}^{\sigma}U^n, U^n \right)
     & + 2  \|\nabla U^n\|^2 = 2 \sum\limits_{j=1}^{n} w_j B(U^{n-j}, U^n) + 2 (f^n,U^n). \label{zxc05}
\end{align}
We then multiply \eqref{zxc05} by the factor $P^{(m)}_{m-n}$ and sum the resulting equation from $n=1$ to $n=m$ to get
 \begin{align}
     2 \sum_{n=1}^{m} &  P^{(m)}_{m-n}\left( D_{N}^{\sigma}U^n, U^n \right)
      + 2 \sum_{n=1}^{m} P^{(m)}_{m-n} \|\nabla U^n\|^2\nonumber\\
      & = 2 \sum_{n=1}^{m} P^{(m)}_{m-n} \sum\limits_{j=1}^{n} w_j B(U^{n-j}, U^n) + 2 \sum_{n=1}^{m} P^{(m)}_{m-n} (f^n,U^n)  \label{ww01} \\
      & =: \Theta_1 + \Theta_2. \nonumber
\end{align}
Next, we will analyze each term of \eqref{ww01}. First, from \cite[Theorem 2.1]{Liao} we have
 \begin{align*}
     2  \left( D_{N}^{\sigma}U^n, U^n \right)
     \geq  \sum_{k=1}^{n}a^{(n)}_{n-k} \delta_{\tau} (\|U^k\|^2).
\end{align*}
Thus, we exchange of the order of summation and apply Lemma \ref{lem01}(ii) to get
 \begin{align}
     \sum_{n=1}^{m} P^{(m)}_{m-n}  \sum_{k=1}^{n}a^{(n)}_{n-k} \delta_{\tau} (\|U^k\|^2) & =  \sum_{k=1}^{m} \delta_{\tau}(\|U^k\|^2)  \sum_{j=k}^{m} P^{(m)}_{m-n} a^{(n)}_{n-k} \nonumber \\
     & = \sum_{k=1}^{m} \delta_{\tau}(\|U^k\|^2) = \|U^m\|^2 - \|U^0\|^2. \label{ww02}
\end{align}
Second, we discuss the first term of the right-hand side of \eqref{ww01}. With \eqref{condi}, we employ Poincar\'{e} and Young inequalities to get
\begin{align*}
   |B(U^{n-j}, U^n)| \leq Q \|\nabla U^{n-j}\| \|\nabla U^{n}\| \leq  Q \|\nabla U^{n-j}\|^2 + \varepsilon^{*}\|\nabla U^{n}\|^2.
\end{align*}
Therefore, we have
 \begin{align}
    |\Theta_1| & \leq
     \varepsilon^{*} \sum_{n=1}^{m} P^{(m)}_{m-n} \sum\limits_{j=1}^{n} |w_j|  \|\nabla U^{n}\|^2  + Q\sum_{n=1}^{m} P^{(m)}_{m-n} \sum\limits_{j=1}^{n} |w_j| \|\nabla U^{n-j}\|^2  \nonumber \\
     & \leq \varepsilon^{*} \left(\sum\limits_{j=1}^{m} |w_j| \right)  \sum_{n=1}^{m} P^{(m)}_{m-n}  \|\nabla U^{n}\|^2+ Q\sum_{n=1}^{m} P^{(m)}_{m-n} \sum\limits_{j=1}^{n} |w_j| \|\nabla U^{n-j}\|^2 \nonumber  \\
     & =: \Phi_1 + \Phi_2. \label{ww03}
\end{align}
Then we note \eqref{qwl08} that
\begin{align}
     \sum\limits_{j=1}^{m} |w_j| & \leq  \kappa\sum\limits_{j=1}^{m} \mu_{\beta, j} \leq \kappa \int_{0}^{t_m} s^{-\beta} ds \leq \frac{\kappa}{1-\beta} t_m^{1-\beta} \leq \frac{\kappa T^{1-\beta}}{1-\beta}, \label{www}
\end{align}
which yields
\begin{align}
     \Phi_1 & \leq  \varepsilon^{*} \frac{\kappa T^{1-\beta}}{1-\beta} \sum_{n=1}^{m} P^{(m)}_{m-n}  \|\nabla U^{n}\|^2. \label{ww04}
\end{align}
To analyze $\Phi_2$, we interchange the order of summation to get
\begin{equation}\label{ww05}
    \begin{split}
        \Phi_2 & = Q \sum\limits_{j=1}^{m}   |w_j|  \sum_{n=j}^{m} P^{(m)}_{m-n}  \|\nabla U^{n-j}\|^2  =  Q \sum\limits_{j=1}^{m}   |w_j|  \sum_{n=j+1 }^{m} P^{(m)}_{m-n}  \|\nabla U^{n-j}\|^2  \\
        & + Q \sum\limits_{j=1}^{m}   |w_j|   P^{(m)}_{m-j}  \|\nabla U^{0}\|^2   =: \Phi_{21} +  \Phi_{22}.
    \end{split}
\end{equation}
 We use the index substitution $\ell = n-j$ and \eqref{qwl08} to bound $\Phi_{21}$ as
\begin{align}
     \Phi_{21}  & = Q \sum\limits_{j=1}^{m}   |w_j|  \sum_{\ell = 1}^{m-j} P^{(m-j)}_{m-j-\ell}  \|\nabla U^{\ell}\|^2   = Q \sum\limits_{n=0}^{m-1}   |w_{m-n}|  \sum_{\ell = 1}^{n} P^{(n)}_{n-\ell}  \|\nabla U^{\ell}\|^2 \nonumber \\
     & \leq Q \sum\limits_{n=0}^{m-1}   \mu_{\beta,m-n}  \sum_{\ell = 1}^{n} P^{(n)}_{n-\ell}  \|\nabla U^{\ell}\|^2.  \label{ww06}
\end{align}
Then for $\Phi_{22}$, we apply Lemma \ref{lem01}(i) and \eqref{www} to obtain
\begin{align}
   \Phi_{22}  \leq Q \Gamma(2-\sigma)\tau_m^{\sigma} \sum\limits_{j=1}^{m}   |w_j| \|\nabla U^{0}\|^2 \leq Q(T) N^{-\sigma} \|\nabla U^{0}\|^2. \label{ww07}
\end{align}
We invoke \eqref{ww04}--\eqref{ww07} into \eqref{ww03} to obtain
 \begin{align}
    \Theta_1 & \leq \varepsilon^{*}\frac{\kappa T^{1-\beta}}{1-\beta} \sum_{n=1}^{m} P^{(m)}_{m-n}  \|\nabla U^{n}\|^2  \nonumber \\
    & + Q \sum\limits_{n=0}^{m-1}  \mu_{\beta,m-n} \sum_{\ell = 1}^{n} P^{(n)}_{n-\ell}  \|\nabla U^{\ell}\|^2  + Q(T) N^{-\sigma} \|\nabla U^{0}\|^2.  \label{ww08}
\end{align}
We then use Poincar\'{e} and Young inequalities to bound $ \Theta_2$ as
\begin{align}
  | \Theta_2| & \leq Q \sum_{n=1}^{m} P^{(m)}_{m-n} \|f^n\| \|\nabla U^n\|   \nonumber\\
    & \leq \frac{1}{2} \sum_{n=1}^{m} P^{(m)}_{m-n}\|\nabla U^n\|^2 + Q \sum_{n=1}^{m} P^{(m)}_{m-n}\|f^n\|^2 \label{ww010} \\
    & \leq \frac{1}{2} \sum_{n=1}^{m} P^{(m)}_{m-n}\|\nabla U^n\|^2 + Q \max\limits_{1\leq n \leq m}\|f^n\|^2 \sum_{n=1}^{m} P^{(m)}_{m-n} \tilde{k}_{1}(t_n). \nonumber
\end{align}
 We now insert \eqref{ww02}, \eqref{ww08} and (\ref{ww010}) into \eqref{ww01}, set $\varepsilon^{*}=(\frac{\kappa T^{1-\beta}}{1-\beta})^{-1} = \frac{(1-\beta)T^{\beta-1}}{\kappa}$ to eliminate $\sum_{n=1}^{m} P^{(m)}_{m-n} \|\nabla U^n\|^2$ from the left-hand side of \eqref{ww01}, and utilize Cauchy-Schwarz inequality to get
 \begin{align}
    \|U^m\|^2 & +\frac{1}{2} \sum_{n=1}^{m} P^{(m)}_{m-n} \|\nabla U^n\|^2   \leq  \|U^0\|^2 \nonumber \\
    &\quad + Q(T) N^{-\sigma} \|\nabla U^{0}\|^2 +  Q \max\limits_{1\leq n \leq m}\|f^n\|^2 \sum_{n=1}^{m} P^{(m)}_{m-n} \tilde{k}_{1}(t_n)\nonumber \\
    & \quad\quad + Q \sum\limits_{n=0}^{m-1}  \mu_{\beta,m-n} \sum_{\ell = 1}^{n} P^{(n)}_{n-\ell}  \|\nabla U^{\ell}\|^2.
        \label{ww09}
\end{align}
Define
 \begin{align*}
   z_n : = \|U^n\|^2 + \frac{1}{2} \sum_{\ell = 1}^{n} P^{(n)}_{n-\ell}  \|\nabla U^{\ell}\|^2, \quad 1\leq n \leq m.
\end{align*}
 and utilize Lemma \ref{lem01}(iii) to reformulate (\ref{ww09}) as
 \begin{align}
    z_m   & \leq  \|U^0\|^2 + Q(T) N^{-\sigma} \|\nabla U^{0}\|^2 \nonumber \\
    & +  Q \max\limits_{1\leq n \leq m}\|f^n\|^2  + Q \sum\limits_{n=0}^{m-1}  \mu_{\beta,m-n} z_n.  \label{ww011}
\end{align}
Then we apply Lemma \ref{lem03} to get
\begin{align*}
   z_m \leq Q(T) \left( \|U^0\|^2 + N^{-\sigma} \|\nabla U^{0}\|^2 +  \max\limits_{1\leq n \leq m}\|f^n\|^2 \right), \quad 1\leq m \leq N,
\end{align*}
which finishes the proof of the theorem.
\end{proof}

\subsection{Convergence estimate}
Denote
\begin{align*}
  \rho^n = u^n - U^n, \quad n=0,1,2,\cdots,N.
\end{align*}
We subtract \eqref{qwl011} from \eqref{qwl010} to arrive at the following error equations
\begin{align}
    & D_N^{\sigma} \rho^n - \Delta \rho^n = \sum\limits_{j=1}^{n} w_j B \rho^{n-j} + R^n,   \quad n\geq 1, \label{yq03} \\
    & \rho^0  = 0. \label{yq04}
\end{align}
Based on Theorem \ref{thm4.1}, we derive the following convergence result.

\begin{theorem}\label{thm4.2}
   Let $u^m$ and $U^m$ be the solution of \eqref{qwl010} and \eqref{qwl011}, respectively. Then for $\gamma\geq \frac{1}{\sigma}$, we have
    \begin{align*}
       \|u^m-U^m\| \leq Q(T) N^{-1}, \quad 1\leq m \leq N.
    \end{align*}
\end{theorem}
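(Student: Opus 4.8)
The plan is to mimic the structure of the stability proof (Theorem~\ref{thm4.1}) applied to the error equations \eqref{yq03}--\eqref{yq04}, since $\rho^n$ satisfies the same scheme with $\|f^n\|$ replaced by the truncation error $\|R^n\|$ and with $\rho^0=0$. Concretely, I would take the inner product of \eqref{yq03} with $2\rho^n$, multiply by the complementary kernel $P^{(m)}_{m-n}$, sum over $n=1,\dots,m$, and run exactly the same manipulations: use \cite[Theorem 2.1]{Liao} for the fractional-derivative term, Lemma~\ref{lem01}(ii) to telescope, Poincar\'e--Young on the $B(\cdot,\cdot)$ term with the same choice $\varepsilon^{*}=\frac{(1-\beta)T^{\beta-1}}{\kappa}$ to absorb $\sum_n P^{(m)}_{m-n}\|\nabla\rho^n\|^2$ on the left, and the bound \eqref{www} on $\sum_j|w_j|$. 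The outcome should be an inequality of the form $z_m \le Q\sum_{n=1}^m P^{(m)}_{m-n}\|R^n\|^2 + Q\sum_{n=0}^{m-1}\mu_{\beta,m-n} z_n$ with $z_n=\|\rho^n\|^2+\tfrac12\sum_{\ell=1}^n P^{(n)}_{n-\ell}\|\nabla\rho^\ell\|^2$, to which Lemma~\ref{lem03} applies, giving $z_m \le Q(T)\sum_{n=1}^m P^{(m)}_{m-n}\|R^n\|^2$ (the $\|R^n\|$ play the role of the $y_n$, so the Gr\"onwall conclusion only amplifies them by the harmless factor $\sum\mu_{\beta,\cdot}$, which is bounded).

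The substantive part is then to estimate $\sum_{n=1}^m P^{(m)}_{m-n}\|R^n\|^2$ with $R^n=(R_1)^n+(R_2)^n$. For the L1 part, Lemma~\ref{lem02} already gives $\sum_n P^{(m)}_{m-n}\|(R_1)^n\| \le Q(T)N^{-\min(2-\sigma,\gamma\sigma)}$, which under the hypothesis $\gamma\ge 1/\sigma$ becomes $Q(T)N^{-1}$ (since then $\gamma\sigma\ge 1$ and $2-\sigma>1$). One must be slightly careful that the statement is phrased with $\|R^n\|$ to the first power, not squared; so I would either prove a squared analogue of Lemma~\ref{lem02} directly, or — cleaner — carry the first power of $\|R^n\|$ through the energy argument by pairing with $2\rho^n$ via Cauchy--Schwarz in the form $2(R^n,\rho^n)\le \|R^n\|^2/\varepsilon$ is wasteful; instead use $2(R^n,\rho^n)\le 2\|R^n\|\,\|\rho^n\|$ and absorb, which ultimately yields a bound on $\|\rho^m\|$ (not its square) in terms of $\sum_n P^{(m)}_{m-n}\|R^n\|$. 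This is the standard device (see \cite{Liao}) and keeps everything at first power, so Lemma~\ref{lem02} applies verbatim.

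The remaining obstacle, and the one I expect to be the main technical point, is the memory truncation term $(R_2)^n$ in \eqref{qwl09}: one must show $\sum_{n=1}^m P^{(m)}_{m-n}\|(R_2)^n\| \le Q(T)N^{-1}$. From \eqref{qwl09} and the regularity bound $\|\p_t\Delta u(\cdot,t)\|\le Qt^{\sigma-1}$ in \eqref{regu}, we have $\|(R_2)^n\| \le \sum_{j=1}^n \int_{t_{j-1}}^{t_j}|K(s)|\int_{s}^{t_j}(t_n-\theta)^{\sigma-1}\,d\theta\,ds$, where the inner $\theta$-integral is over an interval of length at most $\tau_j$. The dangerous terms are $j$ near $1$ (where $K(s)\sim s^{-\beta}$ is singular but $t_n-\theta\approx t_n$ is harmless) and $j$ near $n$ (where $t_n-\theta$ is small, giving the $t^{\sigma-1}$ singularity of $\p_t\Delta u$, but $K(s)$ is bounded there). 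I would split accordingly: for $j\le n-1$ bound $(t_n-\theta)^{\sigma-1}\le (t_n-t_j)^{\sigma-1}$ and use $\int_{t_{j-1}}^{t_j}|K(s)|\,ds=|w_j|\le\kappa\mu_{\beta,j}$, while for $j=n$ one has $\int_{t_{n-1}}^{t_n}|K(s)|\int_s^{t_n}(t_n-\theta)^{\sigma-1}d\theta\,ds \le \kappa\sigma^{-1}\int_{t_{n-1}}^{t_n}s^{-\beta}(t_n-s)^{\sigma}\,ds$, which is $O(\tau_n^{1+\sigma-\beta}t_{n-1}^{-\beta})$-type and, crucially, small because of the graded mesh. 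Summing against $P^{(m)}_{m-n}\le\Gamma(2-\sigma)\tau_n^\sigma$ and against the kernel properties in Lemma~\ref{lem01}(iii), together with the mesh relations $\tau_n\le Q\, t_n^{1-1/\gamma}N^{-1}$ and $t_n\simeq T(n/N)^\gamma$, should produce the $N^{-1}$ rate provided $\gamma\ge 1/\sigma$ is used to control the worst case $n=1$ where $t_1=TN^{-\gamma}$ and hence $\tau_1^\sigma=T^\sigma N^{-\gamma\sigma}\le T^\sigma N^{-1}$. The bookkeeping in this last step — tracking how the coupled exponents $\sigma$ and $\beta$ interact in $\sum_n P^{(m)}_{m-n}\sum_{j}\mu_{\beta,j}(t_n-t_j)^{\sigma-1}$ and confirming it is $O(N^{-1})$ uniformly — is where the real work lies, and is exactly the coupling difficulty the introduction flags; I would handle it by comparing the discrete sum to the integral $\int_0^{t_m}\int_0^{t_n}(t_n-s)^{\sigma-1}s^{-\beta}\,ds$ modulo the $\tau^\sigma$ weight and invoking $\beta<2\sigma$ to guarantee convergence of the relevant Beta-type integrals.
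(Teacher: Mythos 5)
Your proposal matches the paper's proof in all essentials: the energy argument with the complementary kernel applied to the error equation, keeping $\|R^n\|$ at first power (the paper does this via a maximizing index $\bar m$), Lemma~\ref{lem02} for the L1 error, the discrete Gr\"onwall Lemma~\ref{lem03}, and the split of $(R_2)^n$ into the $j\le n-1$ and $j=n$ contributions, with the condition $\beta<2\sigma$ used to write $t_j^{\sigma-\beta}\le Q t_j^{-\sigma}$ so that Lemma~\ref{lem01}(iii) closes the estimate. The remaining bookkeeping you flag is carried out in the paper exactly along the lines you sketch, yielding $\|(R_2)^j\|\le QN^{-1}t_j^{\sigma-\beta}$ and hence the $O(N^{-1})$ rate.
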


\begin{proof}
    Based on \eqref{yq03}--\eqref{yq04}, a similar estimate as \eqref{ww09} leads to
 \begin{align}
    \|\rho^m\|^2 + \sum_{n=1}^{m} P^{(m)}_{m-n} \|\nabla \rho^n\|^2  & \leq  2 \sum_{n=1}^{m} P^{(m)}_{m-n} \|R^n\| \|\rho^n\| \nonumber \\
    & \quad + Q \sum\limits_{n=0}^{m-1}  \mu_{\beta,m-n} \sum_{\ell = 1}^{n} P^{(n)}_{n-\ell}  \|\nabla \rho^{\ell}\|^2.
        \label{ww012}
\end{align}
Denote
\begin{align*}
   \|\rho^n \|_A :=  \sqrt{\|\rho^n\|^2 + \sum_{\ell = 1}^{n} P^{(n)}_{n-\ell}  \|\nabla \rho^{\ell}\|^2}, \quad 1\leq n \leq m \leq N.
\end{align*}
Hence we rewrite \eqref{ww012} as
 \begin{align}
    \|\rho^m \|_A^2  & \leq  2 \sum_{n=1}^{m} P^{(m)}_{m-n} \|R^n\| \|\rho^n \|  + Q \sum\limits_{n=0}^{m-1}  \mu_{\beta,m-n} \|\rho^n \|_A^2,
        \label{ww013}
\end{align}
which, together with Lemma \ref{lem03}, implies that
 \begin{align}
    \|\rho^{m} \|_A^2  & \leq  2 \sum_{n=1}^{m} P^{(m)}_{m-n} \|R^n\|\|\rho^n \|    + Q(T) \sum\limits_{n=0}^{\bar{m}-1}  \mu_{\beta,m-n}  \sum_{j=1}^{n} P^{(n)}_{n-j} \|\rho^j \| \|R^j\| \nonumber \\
    &  \leq  2 \sum_{n=1}^{m} P^{(m)}_{m-n}  (\|(R_1)^n\|+\|(R_2)^n\|)\|\rho^n \|_A  \nonumber \\
     & + Q(T) \sum\limits_{n=0}^{m-1}  \mu_{\beta,m-n} \sum_{j=1}^{n} P^{(n)}_{n-j} \|\rho^j \|_A  (\|(R_1)^j\|+\|(R_2)^j\|).
    \label{ww015}
\end{align}
Next, we select $\bar{m}$ so that $\|\rho^{\bar{m}} \|_A = \max\limits_{1\leq n \leq m} \|\rho^n \|_A$. Then \eqref{ww015} yields
 \begin{align}
    \|\rho^{\bar{m}} \|_A  &  \leq  2 \sum_{n=1}^{\bar{m}} P^{(\bar{m})}_{\bar{m}-n}  (\|(R_1)^n\|+\|(R_2)^n\|) \nonumber \\
     & + Q(T) \sum\limits_{n=0}^{\bar{m}-1}  \mu_{\beta,\bar{m}-n} \sum_{j=1}^{n} P^{(n)}_{n-j}  (\|(R_1)^j\|+\|(R_2)^j\|).
        \label{ww014}
\end{align}
To bound the right-hand side of \eqref{ww014}, we apply Lemma \ref{lem02} to obtain
\begin{align}
    \sum_{j=1}^{n} P^{(n)}_{n-j} \|(R_1)^j\| \leq Q(T) N^{-\min(2-\sigma,\gamma\sigma)}, \quad n\geq 1. \label{ww016}
\end{align}
Then we remain to estimate $\|(R_2)^j\|$. Based on the regularity assumptions \eqref{regu}, we employ \eqref{qwl08} and \eqref{qwl09} to get
\begin{align*}
  \|(R_2)^j\| & \leq Q  \sum\limits_{\ell =1}^{j} \int_{t_{\ell-1}}^{t_{\ell}}  s^{-\beta} \int_{s}^{t_{\ell}}  (t_j-\theta)^{\sigma-1} d\theta ds \\
  & = Q  \sum\limits_{\ell =1}^{j-1} \int_{t_{\ell-1}}^{t_{\ell}}  s^{-\beta} \int_{s}^{t_{\ell}}  (t_j-\theta)^{\sigma-1} d\theta ds \\
  & + Q \int_{t_{j-1}}^{t_{j}}  s^{-\beta} \int_{s}^{t_{j}}  (t_j-\theta)^{\sigma-1} d\theta ds  =: (R_{21})^j + (R_{22})^j,
\end{align*}
where we have $(R_{21})^1=0$ and
\begin{align*}
  (R_{21})^j & \leq Q  \sum\limits_{\ell =1}^{j-1} \int_{t_{\ell-1}}^{t_{\ell}}  s^{-\beta}  (t_j-s)^{\sigma-1}  \tau_{\ell} ds \\
  &  \leq Q N^{-1} \int_{0}^{t_{j-1}} s^{-\beta}  (t_j-s)^{\sigma-1}  ds \\
  &  \leq Q N^{-1} \int_{0}^{t_{j}} s^{-\beta}  (t_j-s)^{\sigma-1}  ds \leq QN^{-1} t_j^{\sigma-\beta}, \quad j\geq 2.
\end{align*}
Then we further estimate $(R_{22})^j$ for $j\geq 1$. First, for $j=1$ we get
\begin{align*}
  (R_{22})^1 & \leq Q  \int_{0}^{t_{1}}  s^{-\beta}  (t_1-s)^{\sigma}  ds \\
  & \leq Q t_{1}^{\sigma} t_1^{1-\beta} = Q t_{1} t_1^{\sigma-\beta} \leq QN^{-1} t_1^{\sigma-\beta},
\end{align*}
and for $j\geq 2$ we have
\begin{align*}
  (R_{22})^j & \leq Q  \int_{t_{j-1}}^{t_{j}}  s^{-\beta}  (t_j-s)^{\sigma}  ds \leq Q \tau_j^{\sigma+1} t_j^{-\beta} \\
  & \leq Q \tau_j \left(\frac{\tau_j}{t_j}\right)^{\sigma} t_j^{\sigma-\beta} \leq QN^{-1} t_j^{\sigma-\beta}.
\end{align*}
Thus, the above two formulas give
\begin{align*}
  (R_{22})^j & \leq QN^{-1} t_j^{\sigma-\beta}, \quad j\geq 1.
\end{align*}
We then utilize the above estimates to get
\begin{align}
    \sum_{j=1}^{n} P^{(n)}_{n-j} \|(R_2)^j\| \leq Q(T) N^{-1}\sum_{j=1}^{n} P^{(n)}_{n-j} t_j^{\sigma-\beta}, \quad n\geq 1. \label{ouhe}
\end{align}
As $0<\beta< \min(2\sigma,1)$ and $0<\sigma<1$, we have
\begin{equation*}
   t_j^{\sigma-\beta} =
   \begin{cases}
     t_j^{2\sigma-\beta}t_j^{-\sigma} \leq T^{2\sigma-\beta} t_j^{-\sigma},  &  0<\sigma \leq 0.5, \;\; 0<\beta < 2\sigma, \\
     t_j^{0.5+\sigma-\beta} t_j^{\sigma-0.5} t_j^{-\sigma} \leq T^{2\sigma-\beta}t_j^{-\sigma}, & 0.5<\sigma <1, \;\; 0<\beta < 1.
   \end{cases}
\end{equation*}
We combine the above two formulas and Lemma \ref{lem01}(iii) to obtain
\begin{align}
    \sum_{j=1}^{n} P^{(n)}_{n-j} \|(R_2)^j\| \leq Q(T) N^{-1} \left(\sum_{j=1}^{n} P^{(n)}_{n-j} t_j^{-\sigma} \right) \leq Q(T) N^{-1}.  \label{ww017}
\end{align}
We invoke \eqref{ww016} and \eqref{ww017} into \eqref{ww014} to complete the proof.
\end{proof}

\section{Fully discrete Galerkin scheme}\label{sec3}

We shall formulate and analyze a fully discrete Galerkin scheme for the temporal semi-discrete scheme \eqref{qwl011}--\eqref{qwl012}.
Define a quasi-uniform partition of $\Omega$ with the mesh diameter $h$. Let $S_h$ be the space of continuous piecewise linear functions on $\Omega$ with respect to the partition. Define the Ritz projector $I_h: H^1_0(\Omega)\rightarrow S_h$ via
\begin{align}
     \big(\nabla (I_h \omega - \omega), \nabla \hat{\chi} \big) = 0 \quad \text{for} \;\; \hat{\chi} \in S_h \label{yq05}
\end{align}
with the following approximation property \cite{Tho}
\begin{align}
   \left\|\p_t^{\ell}(\omega - I_h \omega)\right\|\leq Q h^2 \left\|\p_t^{\ell}\omega(t)\right\|_{H^2(\Omega)}, \quad \ell=0,1. \label{yq06}
\end{align}

In view of the temporal semi-discrete scheme \eqref{qwl011}--\eqref{qwl012}, the fully discrete Galerkin scheme  reads: find $U^m_h\in S_h$ satisfying that for $1 \le n \le m \le N$:

\begin{align}
   \left(D_N^{\sigma} U^n_h, \hat{\chi} \right) + \left(\nabla U_h^{n}, \nabla\hat{\chi} \right)  =  \sum_{j=1}^{n} w_{j} B\left( U_h^{n-j}, \hat{\chi} \right)  +  \left(f^{n}, \hat{\chi} \right), \quad\forall \hat{\chi} \in S_h  \label{lkx01}
\end{align}
with the suitable approximation $U_h^0\approx u_0$. For the convenience of analysis, we denote
 \begin{align*}
     & u(t_n)- U_h^n =\zeta^n - \eta^n, \quad 0\leq n \leq m \le N,
 \end{align*}
where
\begin{align*}
    & \zeta^n = I_h u(t_n) - U_h^n \in S_h, \quad \eta^n = I_h u(t_n) - u(t_n).
\end{align*}
Based on \eqref{qwl010} and \eqref{lkx01}, we apply \eqref{yq05} to get
\begin{align}
   \left(D_N^{\sigma}\zeta^n, \hat{\chi} \right) + \left(\nabla \zeta^{n}, \nabla\hat{\chi} \right)  =  \sum_{j=1}^{n} w_j B\left( \zeta^{n-j}, \hat{\chi} \right)  +  \left(R^{n}+D_N^{\sigma}\eta^n, \hat{\chi} \right).   \label{lkx03}
\end{align}
We then prove the stability result and error estimate for the fully discrete Galerkin scheme as follows.

\begin{theorem}\label{thm5.4}  Let $U_h^n$ be the solution of the fully discrete scheme \eqref{lkx01}. The following stability result holds
 \begin{align}
      \|U^m_h\|^2 \leq Q(T) \left( \|U^0_h\|^2 + N^{-\sigma} \|\nabla U^{0}_h\|^2 +  \max\limits_{1\leq n \leq m}\|f^n\|^2 \right), \quad 1\leq m \leq N.  \label{wl01}
    \end{align}
Moreover, the following error estimate holds for $\gamma\geq \frac{1}{\sigma}$
    \begin{align}
       \|u^m-U^m\| \leq Q(T) (N^{-1}+h^2), \quad 1\leq m \leq N. \label{w1021}
    \end{align}
\end{theorem}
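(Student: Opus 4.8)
The plan is to prove the two assertions of Theorem~\ref{thm5.4} in parallel with the corresponding results of Section~\ref{sec2}, exploiting that the error equation~\eqref{lkx03} for $\zeta^n$ has exactly the same structure as the scheme~\eqref{qwl011} and the semidiscrete error equation~\eqref{yq03}, with an extra forcing term $D_N^{\sigma}\eta^n$. For the stability estimate~\eqref{wl01}, I would take $\hat{\chi}=2U_h^n$ in~\eqref{lkx01}, multiply by $P^{(m)}_{m-n}$, sum in $n$, and then repeat verbatim the argument of Theorem~\ref{thm4.1}: the monotonicity inequality $2(D_N^{\sigma}U_h^n,U_h^n)\ge\sum_k a^{(n)}_{n-k}\delta_\tau(\|U_h^k\|^2)$ from \cite[Theorem~2.1]{Liao} and Lemma~\ref{lem01}(ii) handle the fractional term, the bound~\eqref{www} on $\sum_j|w_j|$ together with Poincar\'e--Young controls the $B$-term with the same choice $\varepsilon^{*}=(1-\beta)T^{\beta-1}/\kappa$, and Lemma~\ref{lem01}(iii) plus Lemma~\ref{lem03} closes the Gr\"onwall loop. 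Since $S_h\subset H^1_0(\Omega)$, every step (in particular the Poincar\'e inequality and the discrete convolution identities) applies unchanged, so~\eqref{wl01} follows with no new difficulty.

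For the error estimate~\eqref{w1021}, I would first observe $u^m-U_h^m=\zeta^m-\eta^m$, and from~\eqref{yq06} we immediately get $\|\eta^m\|\le Qh^2\|u(t_m)\|_{H^2(\Omega)}\le Q(T)h^2$ under the regularity~\eqref{regu}. So the task reduces to bounding $\|\zeta^m\|$. Applying the stability machinery to~\eqref{lkx03} (set $\hat{\chi}=2\zeta^n$, multiply by $P^{(m)}_{m-n}$, sum) yields, exactly as in the derivation of~\eqref{ww013},
\begin{align*}
   \|\zeta^m\|_A^2 \le 2\sum_{n=1}^{m}P^{(m)}_{m-n}\big(\|R^n\|+\|D_N^{\sigma}\eta^n\|\big)\|\zeta^n\|
   + Q\sum_{n=0}^{m-1}\mu_{\beta,m-n}\|\zeta^n\|_A^2 ,
\end{align*}
with $\|\zeta^n\|_A$ defined as in Theorem~\ref{thm4.2}. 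Then Lemma~\ref{lem03}, the maximal-index trick $\|\zeta^{\bar m}\|_A=\max_{1\le n\le m}\|\zeta^n\|_A$, and the already-established estimates~\eqref{ww016} and~\eqref{ww017} take care of the $\|R^n\|$ contribution, giving $Q(T)N^{-1}$ for $\gamma\ge 1/\sigma$. What remains is to control $\sum_{j=1}^{n}P^{(n)}_{n-j}\|D_N^{\sigma}\eta^j\|$.

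The main obstacle is precisely this projection-error term $D_N^{\sigma}\eta^n=\sum_{k=1}^{n}a^{(n)}_{n-k}\delta_\tau\eta^k$. One cannot simply bound $\|\delta_\tau\eta^k\|$ by $\|\eta^k\|+\|\eta^{k-1}\|$ and sum, because $\sum_k a^{(n)}_{n-k}$ is only $O(t_n^{-\sigma})$-type and, more importantly, the graded mesh makes $\delta_\tau\eta^k$ behave like $\int_{t_{k-1}}^{t_k}\|\p_t\eta\|\,ds\le Qh^2\int_{t_{k-1}}^{t_k}s^{\sigma-1}ds$ by~\eqref{yq06} and~\eqref{regu}. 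The clean route is to use the identity $D_N^{\sigma}\eta^n=\sum_{k=1}^n\int_{t_{k-1}}^{t_k}\tilde k_{1-\sigma}(t_n-s)(\delta_\tau\eta^k/\tau_k)\,ds$ and estimate $\|\delta_\tau\eta^k/\tau_k\|\le Qh^2\tau_k^{-1}\int_{t_{k-1}}^{t_k}s^{\sigma-1}ds$, so that $\|D_N^{\sigma}\eta^n\|\le Qh^2\sum_k\int_{t_{k-1}}^{t_k}\tilde k_{1-\sigma}(t_n-s)\,(\text{mesh-average of }s^{\sigma-1})\,ds$, which one compares with the continuous convolution $\tilde k_{1-\sigma}*s^{\sigma-1}$; a standard argument (as in the L1-error analysis) gives $\|D_N^{\sigma}\eta^n\|\le Q(T)h^2$ uniformly in $n$, for $\gamma\ge 1/\sigma$. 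Feeding this into the inequality above and applying Lemma~\ref{lem01}(iii) to the resulting $\sum_n P^{(m)}_{m-n}\cdot 1$ and $\sum_{n}\mu_{\beta,m-n}\cdot 1$ sums yields the $Q(T)h^2$ contribution. Combining with the $N^{-1}$ bound on the truncation part and with $\|\eta^m\|\le Q(T)h^2$ gives $\|u^m-U_h^m\|\le Q(T)(N^{-1}+h^2)$, which is~\eqref{w1021}. (I note the statement writes $\|u^m-U^m\|$; this is a typo for $\|u^m-U_h^m\|$, the fully discrete error.)
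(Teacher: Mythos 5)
Your proposal is correct and follows essentially the same route as the paper: stability by repeating the argument of Theorem~\ref{thm4.1} with $\hat{\chi}=U_h^n$, and the error estimate by splitting $u^m-U_h^m=\zeta^m-\eta^m$, reusing the machinery of Theorem~\ref{thm4.2} for the truncation terms, and bounding $\|D_N^{\sigma}\eta^n\|\le Qh^2$ via the integral representation of the L1 operator together with \eqref{yq06}, \eqref{regu} and comparison with the continuous convolution $\tilde k_{1-\sigma}*t^{\sigma-1}$ (the paper carries this out by treating the first subinterval separately and using $s^{\sigma-1}\le Qt^{\sigma-1}$ on $[t_{k-1},t_k]$ for $k\ge 2$, exactly the "standard argument" you invoke). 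Your observation that the statement's $\|u^m-U^m\|$ should read $\|u^m-U_h^m\|$ is also correct.
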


\begin{proof}
We follow the proof of Theorem \ref{thm4.1} and then choose $\hat{\chi}=U_h^n$ in \eqref{lkx01} to obtain \eqref{wl01}.
We take $\hat{\chi} =\zeta^n$ in \eqref{lkx03} and follow Theorem \ref{thm4.2} and \eqref{ww015} to conclude that
 \begin{align}
    \|\zeta^{\bar{m}} \|
    &  \leq  2 \sum_{n=1}^{\bar{m}} P^{(\bar{m})}_{\bar{m}-n} (\|(R_1)^n\|+\|(R_2)^n\| + \|D_N^{\sigma}\eta^n\|) \nonumber \\
     & + Q(T) \sum\limits_{n=0}^{\bar{m}-1}  \mu_{\beta,\bar{m}-n} \sum_{j=1}^{n} P^{(n)}_{n-j} (\|(R_1)^j\|+\|(R_2)^j\|+\|D_N^{\sigma}\eta^j\|).
    \label{lkx04}
\end{align}
By Theorem \ref{thm4.2}, we have
 \begin{align}
    \sum_{j=1}^{n} P^{(n)}_{n-j} (\|(R_1)^j\|+\|(R_2)^j\|) \leq Q(T)( N^{-1} + N^{-\min(2-\sigma,\gamma\sigma)} ).
    \label{lkx05}
\end{align}
Then we further estimate the term with respect to $\|D_N^{\sigma}\eta^n\|$. We first use \eqref{qwl04} to get
\begin{align}
    D_N^{\sigma}\eta^n& =  \frac{1}{\Gamma(1-\sigma)} \sum\limits_{k=1}^{n}\int_{t_{k-1}}^{t_k} \frac{(\eta^k -\eta^{k-1} )dt}{\tau_k (t_n-t)^{\sigma}}  \nonumber \\
    &= \frac{1}{\Gamma(1-\sigma)} \sum\limits_{k=1}^{n}\int_{t_{k-1}}^{t_k} \frac{1}{\tau_k} \int_{t_{k-1}}^{t_k} \frac{ (I_h - I) \partial_t u}{(t_n-t)^{\sigma}}ds dt. \label{eta}
\end{align}
For $n=1$, we use \eqref{regu}, \eqref{yq06} and $\tau_1=t_1$ to bound \eqref{eta} via
\begin{equation} \label{eta1}
\begin{split}
 \big \| D_N^{\sigma}\eta^1 \big\| &
 \le Q h^2 \int_{0}^{t_1}(t_1 -t)^{-\sigma} \frac{1}{t_1} \int_{0}^{t_1} s^{\sigma-1 }ds dt \\
&
 \le Q h^2  \int_{0}^{t_1}(t_1-t)^{-\sigma}    t_1^{\sigma - 1 } dt \le Q h^2.
\end{split}
\end{equation}
Furthermore, we estimate $D_N^{\sigma}\eta^n$ in \eqref{eta} for $2 \le n \le N$ as follows
\begin{equation} \label{etan}
\begin{split}
 \big \| D_N^{\sigma}\eta^n \big\| &
 \le Q h^2  \sum\limits_{k=1}^{n}\int_{t_{k-1}}^{t_k} (t_n-t)^{-\sigma} \frac{1}{\tau_k} \int_{t_{k-1}}^{t_k}  s^{\sigma-1 }ds dt\\
&
 = Q h^2   \int_{0}^{t_1} (t_n-t)^{-\sigma} \frac{1}{t_1}  \int_{0}^{t_1}  s^{\sigma -1 }ds dt \\
&    + Q h^2 \sum\limits_{k=2}^{n}\int_{t_{k-1}}^{t_k} (t_n-t)^{-\sigma} \frac{1}{\tau_k} \int_{t_{k-1}}^{t_k}  s^{\sigma -1 }ds dt \\
& = : \hat \Xi_1 +  \hat \Xi_2.
\end{split}
\end{equation}
We bound $\hat \Xi_1 $ in \eqref{etan} with $n\geq 2$ by
\begin{equation} \label{etan1}
\begin{split}
    \hat \Xi_1 & \le  Q h^2  \int_{0}^{t_1} (t_n-t)^{-\sigma}   t_1^{\sigma -1}  dt  \le  Q h^2  t_1^{\sigma } t_{n-1}^{-\sigma} \le  Q h^2  t_1^{\sigma } t_{1}^{-\sigma} = Q h^2.
\end{split}
\end{equation}
We further estimate $\hat \Xi_2 $ in \eqref{etan} with $s^{\sigma-1 }\leq Q t^{\sigma-1 }$ ($s,t\in [t_{k-1},t_k]$, $k\geq 2$) by
 \begin{equation} \label{etan2}
 \begin{split}
      \hat \Xi_2 & \le Q h^2  \sum\limits_{k=2}^{n}\int_{t_{k-1}}^{t_k} (t_n-t)^{-\sigma} t^{\sigma-1 }dt \\
      & \qquad = Q h^2  \int_{t_1}^{t_n} (t_n-t)^{-\sigma} t^{\sigma-1 }dt  \\
      &  \qquad \le Q h^2  \int_{0}^{t_n} (t_n-t)^{-\sigma} t^{\sigma-1 }dt  \le Q h^2.
 \end{split}
\end{equation}
We combine \eqref{eta}--\eqref{etan2} to conclude that $\|D_N^{\sigma}\eta^n\| \le Qh^2$ for $1 \le n \le m \le N$, which, together with \eqref{lkx04}--\eqref{lkx05} and Lemma \ref{lem03}, gives
 \begin{equation*}
    \begin{split}
         \|\zeta^{m}\| \le \|\zeta^{\bar{m}}\|  \ds \le Q (N^{-1} +h^2).
    \end{split}
 \end{equation*}
 We invoke this with \eqref{yq06} to obtain \eqref{w1021} and thus complete the proof.
\end{proof}

\section{Numerical experiment}\label{sec4}
We carry out numerical experiments to investigate the convergence behavior of the fully discrete Galerkin scheme \eqref{lkx01}. Following the theoretical analysis, we always set $\gamma=1/\sigma$. The uniform spatial partition is adopted for both one-dimensional and two-dimensional domains.

\vskip 1mm
\textbf{Example 1: Positive-type power-law kernel}
Let  $\Omega = (0,1)$, $T=1$, $\mu_1= \mu_2 = 1$,  $K(t)=t^{-\beta}$, $u_0(x)=\sin(\pi x)$, and $f=1$.
To test the convergence behavior of the spatial $L^2$ error at $t=T$, we denote the following temporal error and the convergence rate by the two-mesh principle (see \cite[p.~107]{Farrell})
\begin{align*}
  & E_\tau(N,M)= \sqrt{h\sum_{j=1}^{M-1} \left(U^{2{N}}_j-U^N_j\right)^2},\quad  \text{Rate}_\tau = \log_{2}\left(\frac{E_\tau(N,M)}{E_\tau(2N,M)}\right),
  \end{align*}
  and similarly define the spatial error and the corresponding convergence rate
  \begin{align*}
  E_h(N,M)=   \sqrt{h\sum_{j=1}^{M-1}\left(U_{2j}^{N}-U_j^N\right)^2}, \quad \text{Rate}_h = \log_{2}\left(\frac{E_h(N,M)}{E_h(N,2M)}\right).
\end{align*}
Numerical results are presented in Tables \ref{tab10}--\ref{tab11} under different parameters $\beta$ and $\sigma$, which indicates the first-order accuracy in time and second-order accuracy in space that are consistent with the theoretical analysis.

\begin{table}
    \center \footnotesize
    \caption{Errors and temporal convergence rates for Example 1 under $M=32$.}
    \label{tab10}
    \begin{tabular}{cccccccccccc}
      \toprule
    & & & \multicolumn{2}{c}{$\beta=0.1$} & &\multicolumn{2}{c}{$\beta=0.2$}\\
   \cmidrule{4-5}  \cmidrule{7-8}
      $\sigma=0.2$ &$N$ & & {$E_{\tau}$} & {$\text{Rate}_{\tau}$}  & $N$ & {$E_{\tau}$} & {$\text{Rate}_{\tau}$} \\
      \midrule
       & $128$    & &  $7.1575 \times 10^{-3}$  &    *         & $128$    & $7.4800 \times 10^{-3}$  &    *    \\
       & $256$   & &  $3.4648 \times 10^{-3}$  &    1.05      & $256$   & $3.6153 \times 10^{-3}$  & 1.05   \\
       & $512$   & &  $1.6736 \times 10^{-3}$  &    1.05      & $512$   & $1.7462 \times 10^{-3}$  & 1.05    \\
       & $1024$   & &  $7.8534 \times 10^{-4}$  &    1.09      & $1024$   & $8.2099 \times 10^{-4}$  & 1.09    \\
       \midrule
        & & & \multicolumn{2}{c}{$\beta=0.3$} & &\multicolumn{2}{c}{$\beta=0.6$}\\
   \cmidrule{4-5}  \cmidrule{7-8}
      $\sigma=0.4$&$N$ & & {$E_{\tau}$} & {$\text{Rate}_{\tau}$}  & $N$ & {$E_{\tau}$} & {$\text{Rate}_{\tau}$}\\
      \midrule
       & $512$    & &  $1.0267 \times 10^{-3}$  &    *         & $512$    & $3.5532 \times 10^{-4}$  &    *    \\
       & $1024$   & &  $4.9380 \times 10^{-4}$  &    1.06      & $1024$   & $1.7163 \times 10^{-4}$  & 1.05   \\
       & $2048$   & &  $2.3917 \times 10^{-4}$  &    1.05      & $2048$   & $8.3450 \times 10^{-5}$  & 1.04    \\
       & $4096$   & &  $1.1666 \times 10^{-4}$  &    1.04      & $4096$   & $4.0836 \times 10^{-5}$  & 1.03    \\
        \midrule
        & & & \multicolumn{2}{c}{$\beta=0.5$} & &\multicolumn{2}{c}{$\beta=0.8$}\\
   \cmidrule{4-5}  \cmidrule{7-8}
      $\sigma=0.7$ &$N$ & & {$E_{\tau}$} & {$\text{Rate}_{\tau}$}  & $N$ & {$E_{\tau}$} & {$\text{Rate}_{\tau}$} \\
      \midrule
       & $512$   & &  $2.5981 \times 10^{-4}$  &    *       & $512$   & $6.4781 \times 10^{-5}$  &   *    \\
       & $1024$  & &  $1.2536 \times 10^{-4}$  &    1.05    & $1024$  & $3.1617 \times 10^{-5}$  & 1.03   \\
       & $2048$  & &  $6.0415 \times 10^{-5}$  &    1.05    & $2048$  & $1.5328 \times 10^{-5}$  & 1.04    \\
       & $4096$  & &  $2.9122 \times 10^{-5}$  &    1.05    & $4096$  & $7.4075 \times 10^{-6}$  & 1.05    \\
      \bottomrule
    \end{tabular}
\end{table}

\begin{table}
    \center \footnotesize
    \caption{Errors and spatial convergence rates for Example 1 under $N=256$.}
    \label{tab11}
    \begin{tabular}{cccccccccccc}
      \toprule
    & & & \multicolumn{2}{c}{$\beta=0.1$} & &\multicolumn{2}{c}{$\beta=0.2$}\\
   \cmidrule{4-5}  \cmidrule{7-8}
      $\sigma=0.2$ &$M$ & & {$E_{h}$} & {$\text{Rate}_{h}$}  & $M$ & {$E_{h}$} & {$\text{Rate}_{h}$} \\
      \midrule
       & $64$    & &  $5.0188 \times 10^{-5}$  &    *         & $64$    & $3.6960 \times 10^{-5}$  &    *    \\
       & $128$   & &  $1.2567 \times 10^{-5}$  &    2.00      & $128$   & $9.2810 \times 10^{-6}$  & 2.00   \\
       & $256$   & &  $3.1433 \times 10^{-6}$  &    2.00      & $256$   & $2.3249 \times 10^{-6}$  & 2.00    \\
       & $512$   & &  $7.8595 \times 10^{-7}$  &    2.00      & $512$   & $5.8177 \times 10^{-7}$  & 2.00    \\
       \midrule
        & & & \multicolumn{2}{c}{$\beta=0.3$} & &\multicolumn{2}{c}{$\beta=0.6$}\\
   \cmidrule{4-5}  \cmidrule{7-8}
      $\sigma=0.4$&$M$ & & {$E_{h}$} & {$\text{Rate}_{h}$}  & $M$ & {$E_{h}$} & {$\text{Rate}_{h}$}\\
      \midrule
       & $64$    & &  $1.2542 \times 10^{-5}$  &    *         & $64$    & $4.5976 \times 10^{-6}$  &    *    \\
       & $128$   & &  $3.1470 \times 10^{-6}$  &    1.99      & $128$   & $1.1916 \times 10^{-6}$  & 1.95   \\
       & $256$   & &  $7.8761 \times 10^{-7}$  &    2.00      & $256$   & $3.0302 \times 10^{-7}$  & 1.98    \\
       & $512$   & &  $1.9698 \times 10^{-7}$  &    2.00      & $512$   & $7.6384 \times 10^{-8}$  & 1.99    \\
        \midrule
        & & & \multicolumn{2}{c}{$\beta=0.5$} & &\multicolumn{2}{c}{$\beta=0.8$}\\
   \cmidrule{4-5}  \cmidrule{7-8}
      $\sigma=0.7$ &$M$ & & {$E_{h}$} & {$\text{Rate}_{h}$}  & $M$ & {$E_{h}$} & {$\text{Rate}_{h}$} \\
      \midrule
       & $64$    & &  $2.6147 \times 10^{-5}$  &    *         & $64$    & $2.0143 \times 10^{-5}$  &    *    \\
       & $128$   & &  $6.5977 \times 10^{-6}$  &    1.99      & $128$   & $5.0702 \times 10^{-6}$  & 1.99   \\
       & $256$   & &  $1.6569 \times 10^{-6}$  &    1.99      & $256$   & $1.2718 \times 10^{-6}$  & 2.00    \\
       & $512$   & &  $4.1516 \times 10^{-7}$  &    2.00      & $512$   & $3.1846 \times 10^{-7}$  & 2.00    \\
      \bottomrule
    \end{tabular}
\end{table}

\vskip 1mm
\textbf{Example 2: Positive-type tempered power-law kernel}
Let  $\Omega = (0,1)$, $T=1$, $\mu_1= \mu_2 = 1$, $K(t)=e^{-t}t^{-\beta}$, $u_0(x)=\sin(\pi x)$, and the source term $f=0$. The errors and convergence rates are measured as above and presented in Tables \ref{tab12}--\ref{tab13}, which again demonstrate the first-order accuracy in time and second-order accuracy in space.

\begin{table}
    \center \footnotesize
    \caption{Errors and temporal convergence rates for Example 2 under $M=32$.}
    \label{tab12}
    \begin{tabular}{cccccccccccc}
      \toprule
    & & & \multicolumn{2}{c}{$\beta=0.1$} & &\multicolumn{2}{c}{$\beta=0.2$}\\
   \cmidrule{4-5}  \cmidrule{7-8}
      $\sigma=0.2$ &$N$ & & {$E_{\tau}$} & {$\text{Rate}_{\tau}$}  & $N$ & {$E_{\tau}$} & {$\text{Rate}_{\tau}$} \\
      \midrule
       & $64$    & &  $8.6267 \times 10^{-3}$  &    *         & $64$    & $9.1278 \times 10^{-3}$  &    *    \\
       & $128$   & &  $4.1032 \times 10^{-3}$  &    1.07      & $128$   & $4.3227 \times 10^{-3}$  & 1.08   \\
       & $256$   & &  $1.9616 \times 10^{-3}$  &    1.06      & $256$   & $2.0640 \times 10^{-3}$  & 1.07    \\
       & $512$   & &  $9.4456 \times 10^{-4}$  &    1.05      & $512$   & $9.9413 \times 10^{-4}$  & 1.05    \\
       \midrule
        & & & \multicolumn{2}{c}{$\beta=0.3$} & &\multicolumn{2}{c}{$\beta=0.6$}\\
   \cmidrule{4-5}  \cmidrule{7-8}
      $\sigma=0.4$&$N$ & & {$E_{\tau}$} & {$\text{Rate}_{\tau}$}  & $N$ & {$E_{\tau}$} & {$\text{Rate}_{\tau}$}\\
      \midrule
       & $64$   & &  $4.1793 \times 10^{-3}$  &    *       & $64$   & $4.6180 \times 10^{-4}$  &   *    \\
       & $128$  & &  $1.9159 \times 10^{-3}$  &    1.13    & $128$  & $2.1188 \times 10^{-4}$  & 1.12   \\
       & $256$  & &  $8.9827 \times 10^{-4}$  &    1.09    & $256$  & $9.9581 \times 10^{-5}$  & 1.09    \\
       & $512$  & &  $4.2750 \times 10^{-4}$  &    1.07    & $512$  & $4.7262 \times 10^{-5}$  & 1.08    \\
              \midrule
        & & & \multicolumn{2}{c}{$\beta=0.5$} & &\multicolumn{2}{c}{$\beta=0.8$}\\
   \cmidrule{4-5}  \cmidrule{7-8}
      $\sigma=0.7$ &$N$ & & {$E_{\tau}$} & {$\text{Rate}_{\tau}$}  & $N$ & {$E_{\tau}$} & {$\text{Rate}_{\tau}$} \\
      \midrule
       & $64$   & &  $5.9709 \times 10^{-4}$  &    *       & $64$   & $1.3716 \times 10^{-4}$  &   *    \\
       & $128$  & &  $2.9394 \times 10^{-4}$  &    1.02    & $128$  & $7.2378 \times 10^{-5}$  & 0.92   \\
       & $256$  & &  $1.4206 \times 10^{-4}$  &    1.05    & $256$  & $3.6660 \times 10^{-5}$  & 0.98    \\
       & $512$  & &  $6.7966 \times 10^{-5}$  &    1.06    & $512$  & $1.8113 \times 10^{-5}$  & 1.02    \\
      \bottomrule
    \end{tabular}
\end{table}

\begin{table}
    \center \footnotesize
    \caption{Errors and spatial convergence rates for Example 2 under $N=128$.}
    \label{tab13}
    \begin{tabular}{cccccccccccc}
      \toprule
    & & & \multicolumn{2}{c}{$\beta=0.1$} & &\multicolumn{2}{c}{$\beta=0.2$}\\
   \cmidrule{4-5}  \cmidrule{7-8}
      $\sigma=0.2$ &$M$ & & {$E_{h}$} & {$\text{Rate}_{h}$}  & $M$ & {$E_{h}$} & {$\text{Rate}_{h}$} \\
      \midrule
       & $64$    & &  $3.1192 \times 10^{-6}$  &    *         & $64$    & $2.0487 \times 10^{-6}$  &    *    \\
       & $128$   & &  $7.8466 \times 10^{-7}$  &    1.99      & $128$   & $5.0743 \times 10^{-7}$  & 2.01   \\
       & $256$   & &  $1.9647 \times 10^{-7}$  &    2.00      & $256$   & $1.2656 \times 10^{-7}$  & 2.00    \\
       & $512$   & &  $4.9137 \times 10^{-8}$  &    2.00      & $512$   & $3.1621 \times 10^{-8}$  & 2.00    \\
       \midrule
        & & & \multicolumn{2}{c}{$\beta=0.3$} & &\multicolumn{2}{c}{$\beta=0.6$}\\
   \cmidrule{4-5}  \cmidrule{7-8}
      $\sigma=0.4$&$M$ & & {$E_{h}$} & {$\text{Rate}_{h}$}  & $M$ & {$E_{h}$} & {$\text{Rate}_{h}$}\\
      \midrule
       & $64$    & &  $7.3448 \times 10^{-6}$  &    *         & $64$    & $3.4129 \times 10^{-6}$  &    *    \\
       & $128$   & &  $1.8350 \times 10^{-6}$  &    2.00      & $128$   & $8.5378 \times 10^{-7}$  & 2.00   \\
       & $256$   & &  $4.5866 \times 10^{-7}$  &    2.00      & $256$   & $2.1348 \times 10^{-7}$  & 2.00    \\
       & $512$   & &  $1.1466 \times 10^{-7}$  &    2.00      & $512$   & $5.3372 \times 10^{-8}$  & 2.00    \\
        \midrule
        & & & \multicolumn{2}{c}{$\beta=0.5$} & &\multicolumn{2}{c}{$\beta=0.8$}\\
   \cmidrule{4-5}  \cmidrule{7-8}
      $\sigma=0.7$ &$M$ & & {$E_{h}$} & {$\text{Rate}_{h}$}  & $M$ & {$E_{h}$} & {$\text{Rate}_{h}$} \\
      \midrule
       & $64$    & &  $6.1362 \times 10^{-7}$  &    *         & $64$    & $1.6101 \times 10^{-6}$  &    *    \\
       & $128$   & &  $1.5351 \times 10^{-7}$  &    2.00      & $128$   & $4.0274 \times 10^{-7}$  & 2.00   \\
       & $256$   & &  $3.8384 \times 10^{-8}$  &    2.00      & $256$   & $1.0070 \times 10^{-7}$  & 2.00    \\
       & $512$   & &  $9.5963 \times 10^{-9}$  &    2.00      & $512$   & $2.5176 \times 10^{-8}$  & 2.00    \\
      \bottomrule
    \end{tabular}
\end{table}

\vskip 1mm
\textbf{Example 3: Non-positive-type power-law kernel}
Let $\Omega = (0,1)\times (0,1)$, $T=1$, $\mu_1=1$, $\mu_2 = 0$,  $K(t)=-t^{-\beta}$, $u_0\equiv 0$, and the source term $f=1$.
In the two-dimensional case, we define the following $L^2$ error and the convergence rates
\begin{align*}
  & E^\tau(N,M)= \sqrt{h_x\sum_{i=1}^{M-1}h_y\sum_{j=1}^{M-1} \left(U^{2{N}}_{i,j}-U^N_{i,j}\right)^2},\quad  \text{Rate}^\tau = \log_{2}\left(\frac{E^\tau(N,M)}{E^\tau(2N,M)}\right), \\
  & E^h(N,M)=   \sqrt{h_x\sum_{i=1}^{M-1}h_y\sum_{j=1}^{M-1}\left(U_{2i,2j}^{N}-U_{i,j}^N\right)^2}, \quad \text{Rate}^h = \log_{2}\left(\frac{E^h(N,M)}{E^h(N,2M)}\right).
\end{align*}
The errors and convergence rates are measured as above and presented in Tables \ref{tab14}--\ref{tab15}, which demonstrate the first-order accuracy in time and second-order accuracy in space for the non-positive kernel case.

\begin{table}
    \center \footnotesize
    \caption{Errors and temporal convergence rates for Example 3 under $M=16$.}
    \label{tab14}
    \vskip 2mm
    \begin{tabular}{cccccccccccc}
      \toprule
    & & & \multicolumn{2}{c}{$\beta=0.1$} & &\multicolumn{2}{c}{$\beta=0.2$}\\
   \cmidrule{4-5}  \cmidrule{7-8}
      $\sigma=0.2$ &$N$ & & {$E^{\tau}$} & {$\text{Rate}^{\tau}$}  & $N$ & {$E^{\tau}$} & {$\text{Rate}^{\tau}$} \\
      \midrule
       & $64$    & &  $1.6558 \times 10^{-3}$  &    *         & $64$    & $1.7542 \times 10^{-3}$  &    *    \\
       & $128$   & &  $8.0560 \times 10^{-4}$  &    1.04      & $128$   & $8.5165 \times 10^{-4}$  & 1.04   \\
       & $256$   & &  $3.8905 \times 10^{-4}$  &    1.05      & $256$   & $4.1107 \times 10^{-4}$  & 1.05    \\
       & $512$   & &  $1.8741 \times 10^{-4}$  &    1.05      & $512$   & $1.9807 \times 10^{-4}$  & 1.05    \\
       \midrule
        & & & \multicolumn{2}{c}{$\beta=0.25$} & &\multicolumn{2}{c}{$\beta=0.55$}\\
   \cmidrule{4-5}  \cmidrule{7-8}
      $\sigma=0.4$&$N$ & & {$E^{\tau}$} & {$\text{Rate}^{\tau}$}  & $N$ & {$E^{\tau}$} & {$\text{Rate}^{\tau}$}\\
      \midrule
       & $64$   & &  $2.0051 \times 10^{-3}$  &    *       & $64$   & $1.0417 \times 10^{-2}$  &   *    \\
       & $128$  & &  $9.9223 \times 10^{-4}$  &    1.02    & $128$  & $5.3782 \times 10^{-3}$  & 0.95   \\
       & $256$  & &  $4.8326 \times 10^{-4}$  &    1.04    & $256$  & $2.7000 \times 10^{-3}$  & 0.94    \\
       & $512$  & &  $2.3340 \times 10^{-4}$  &    1.05    & $512$  & $1.3362 \times 10^{-3}$  & 1.01    \\
              \midrule
        & & & \multicolumn{2}{c}{$\beta=0.3$} & &\multicolumn{2}{c}{$\beta=0.6$}\\
   \cmidrule{4-5}  \cmidrule{7-8}
      $\sigma=0.6$ &$N$ & & {$E^{\tau}$} & {$\text{Rate}^{\tau}$}  & $N$ & {$E^{\tau}$} & {$\text{Rate}^{\tau}$} \\
      \midrule
       & $64$   & &  $3.5014 \times 10^{-3}$  &    *       & $64$   & $9.6638 \times 10^{-2}$  &   *    \\
       & $128$  & &  $1.7820 \times 10^{-3}$  &    0.97    & $128$  & $5.4491 \times 10^{-2}$  & 0.83   \\
       & $256$  & &  $8.8840 \times 10^{-4}$  &    1.00    & $256$  & $2.8725 \times 10^{-2}$  & 0.92    \\
       & $512$  & &  $4.3725 \times 10^{-4}$  &    1.02    & $512$  & $1.4630 \times 10^{-2}$  & 0.97    \\
      \bottomrule
    \end{tabular}
\end{table}

\begin{table}
    \center \footnotesize
    \caption{Errors and spatial convergence rates for Example 3 under $N=32$.}
    \label{tab15}
    \vskip 2mm
    \begin{tabular}{cccccccccccc}
      \toprule
    & & & \multicolumn{2}{c}{$\beta=0.1$} & &\multicolumn{2}{c}{$\beta=0.2$}\\
   \cmidrule{4-5}  \cmidrule{7-8}
      $\sigma=0.2$ &$M$ & & {$E^{h}$} & {$\text{Rate}^{h}$}  & $M$ & {$E^{h}$} & {$\text{Rate}^{h}$} \\
      \midrule
       & $8$    & &  $1.9866 \times 10^{-3}$  &    *         & $8$    & $2.1645 \times 10^{-3}$  &    *    \\
       & $16$   & &  $4.7485 \times 10^{-4}$  &    2.06      & $16$   & $5.1805 \times 10^{-4}$  & 2.06   \\
       & $32$   & &  $1.1636 \times 10^{-4}$  &    2.03      & $32$   & $1.2701 \times 10^{-4}$  & 2.03    \\
       & $64$   & &  $2.8910 \times 10^{-5}$  &    2.01      & $64$   & $3.1560 \times 10^{-5}$  & 2.01    \\
       \midrule
        & & & \multicolumn{2}{c}{$\beta=0.25$} & &\multicolumn{2}{c}{$\beta=0.55$}\\
   \cmidrule{4-5}  \cmidrule{7-8}
      $\sigma=0.4$&$M$ & & {$E^{h}$} & {$\text{Rate}^{h}$}  & $M$ & {$E^{h}$} & {$\text{Rate}^{h}$}\\
      \midrule
       & $8$    & &  $2.5944 \times 10^{-3}$  &    *         & $8$    & $8.4393 \times 10^{-3}$  &    *    \\
       & $16$   & &  $6.2309 \times 10^{-4}$  &    2.06      & $16$   & $2.0426 \times 10^{-3}$  & 2.05   \\
       & $32$   & &  $1.5302 \times 10^{-4}$  &    2.03      & $32$   & $5.0262 \times 10^{-4}$  & 2.02    \\
       & $64$   & &  $3.8043 \times 10^{-5}$  &    2.01      & $64$   & $1.2504 \times 10^{-4}$  & 2.01    \\
        \midrule
        & & & \multicolumn{2}{c}{$\beta=0.3$} & &\multicolumn{2}{c}{$\beta=0.6$}\\
   \cmidrule{4-5}  \cmidrule{7-8}
      $\sigma=0.6$ &$M$ & & {$E^{h}$} & {$\text{Rate}^{h}$}  & $M$ & {$E^{h}$} & {$\text{Rate}^{h}$} \\
      \midrule
       & $8$    & &  $3.7462 \times 10^{-3}$  &    *         & $8$    & $4.8490 \times 10^{-2}$  &    *    \\
       & $16$   & &  $9.0422 \times 10^{-4}$  &    2.05      & $16$   & $1.1682 \times 10^{-2}$  & 2.05   \\
       & $32$   & &  $2.2245 \times 10^{-4}$  &    2.02      & $32$   & $2.8645 \times 10^{-3}$  & 2.03    \\
       & $64$   & &  $5.5336 \times 10^{-5}$  &    2.01      & $64$   & $7.1182 \times 10^{-4}$  & 2.01    \\
      \bottomrule
    \end{tabular}
\end{table}

\section{Conclusion}\label{sec5}
This work considers the subdiffusion problem with non-positive memory,  which could be transformed from subdiffusion or subdiffusive Fokker-Planck equation with variable exponent.  We apply the non-uniform L1 formula and interpolation quadrature to discretize the fractional derivative and the memory term, respectively, and then adopt the complementary discrete convolution kernel approach to prove the stability and first-order accuracy in time of the scheme. Numerical experiments are performed to substantiate the theoretical results. The developed ideas and methods could also be extended to investigate numerical discretization of other evolution equations with non-positive memory.

\vskip 0.15in
{\footnotesize\noindent \textbf{Acknowledgments}
 This work was partially supported by the National Key R\&D Program of China (No. 2023YFA1008903), the National Natural Science Foundation of China (No. 12301555), the Taishan Scholars Program of Shandong Province (No. tsqn202306083), the Postdoctoral Fellowship Program of CPSF (No. GZC20240938).
}

\vskip 0.15in
{\footnotesize\noindent \textbf{Data availability} No data is used in this work.}

\section*{Declarations}

{\footnotesize\noindent \textbf{Conflict of interest} The authors declare that they have no conflict of interest.}

\end{document}